\newtheorem{theorem}{Theorem}[section]
\newtheorem{lemma}[theorem]{Lemma}
\newtheorem{prop}[theorem]{Proposition}
\newtheorem{corollary}[theorem]{Corollary}
\newtheorem{conjecture}[theorem]{{Conjecture}}
\newtheorem{claim}[theorem]{{Claim}}
\theoremstyle{remark}
\newtheorem{remark}[theorem]{Remark}
\theoremstyle{definition}
\newtheorem{definition}[theorem]{{Definition}}
\def\bclaim{\begin{claim}}
\def\eclaim{\end{claim}}
\def\bdefin{\begin{definition}}
\def\edefin{\end{definition}}
\def\bcor{\begin{corollary}}
\def\ecor{\end{corollary}}
\def\bthm{\begin{theorem}}
\def\ethm{\end{theorem}}
\def\bconj{\begin{conjecture}}
\def\econj{\end{conjecture}}
\def\blem{\begin{lemma}}
\def\elem{\end{lemma}}
\def\blemma{\begin{lemma}}
\def\elemma{\end{lemma}}
\def\bprop{\begin{prop}}
\def\eprop{\end{prop}}
\def\bremark{\begin{remark}}
\def\eremark{\end{remark}}
\newcommand{\R}{\mathbb R}
\newcommand{\RR}{\mathbb R}
\newcommand{\C}{\mathbb C}
\newcommand{\CC}{\mathbb C}
\newcommand{\NN}{\mathbb N}
\newcommand{\ZZ}{\mathbb Z}
\renewcommand{\P}{\vec P}
\DeclareMathOperator{\spec}{spec}
\newcommand{\calF}{\mathcal F}
\newcommand{\la}{\left\langle}
\newcommand{\ra}{\right\rangle}
\def\USC{\operatorname{USC}}
\def\usc{\operatorname{usc}}
\def\diag{\operatorname{diag}}
\def\Im{\operatorname{Im}\,}
\def\Re{\operatorname{Re}\,}
\def\Sym{\operatorname{Sym}^2}
\def\Int{\hbox{\rm int}\,}
\def\Im{{\operatorname{Im}\,}}
\def\Re{{\operatorname{Re}\,}}
\def\tr{\hbox{\rm tr}}
\def\q{\quad}
\def\bpf{\begin{proof}}
\def\epf{\end{proof}}
\def\beq{\begin{equation}}
\def\eeq{\end{equation}}
\def\beqno{\begin{equation*}}
\def\eeqno{\end{equation*}}
\def\eaeq{\end{aligned}}
\def\baeq{\begin{aligned}}
\def\P{{\mathcal P}}
\def\SA{\hbox{\rm SA}}
\def\tF{\widetilde F}
\def\del{\partial}
\def\sm{\setminus}
\def\HL{Harvey--Lawson }
\def\n{\nabla}
\def\h#1{\hbox{#1}}
\def\lb{\label}
\def\opcit{\underbar{\phantom{aaaaa}}}
\def\th{\theta}
\def\i{\sqrt{-1}}
\def\del{\partial}
\def\ra{\rightarrow}
\def\eps{\epsilon}
\def\del{\partial}
\def\sm{\setminus}
\def\sm{\setminus}
\def\w{\wedge}
\def\beq{\begin{equation}}
\def\eeq{\end{equation}}
\def\bi#1{\bibitem{#1}}
\def\PSH{\mathrm{PSH}}
\def\h#1{\hbox{#1}}
\def\calS{{\mathcal S}}
\def\la{\lambda}
\def\Th{\Theta}
\def\Symn{\Sym(\RR^n)}
\def\wtTh{\widetilde{\Theta}}
\def\wtth{\widetilde{\theta}}
\begin{document}
\title{A minimum principle for Lagrangian graphs}
\author{Tam\'as Darvas and Yanir A. Rubinstein}
\date{}
\maketitle

\begin{abstract}
The classical minimum principle is foundational in convex and complex analysis
and plays an important r\^ole in the study of the real and complex Monge--Amp\`ere equations.
This note establishes a minimum principle in Lagrangian geometry.
This principle relates the classical Lagrangian angle of Harvey--Lawson
and the space-time Lagrangian angle introduced recently by Rubinstein--Solomon.
As an application, this gives a new formula for 
solutions of the 
degenerate special Lagrangian equation in space-time
in terms of the (time) partial Legendre transform of a family 
of solutions of obstacle problems for the (space)  non-degenerate 
special Lagrangian equation.

\end{abstract}



\section{Introduction}
\label{}

Suppose that $f$ is a convex function on $\RR\times\RR^n$. Then
$$
g(x):=
\inf_{s\in\RR}f(s,x)
$$ 
is either identically $-\infty$, or else a convex function on $\RR^n$
\cite[Theorem 5.7]{Rockafellar},\cite[Theorem 1.3.1]{Kiselman-notes}.
This is often referred to as the ``minimum principle" for convex functions.
If we replace ``convex" with ``plurisubharmonic" and $\RR$ by $\CC$ this is not true in general.
An important situation in which this is true was described by Kiselman in the 70's,
and we now state the simplest version of his theorem.
Let
$I\subset\RR$ be an open interval  and denote by
$$
S : = I + \i \RR \subset \Bbb C
$$ the strip associated to $I$.
Denote by $s$ the coordinate on $I$ and by $\tau :=s+\i t$ the complex
coordinate on $S$.

\begin{theorem}\label{KiselmanThm} {\rm (Kiselman's principle \cite{Kiselman})} Let $D \subset \Bbb C^n$ be a domain.
If $v\in\PSH(S\times D)$ is such that $v(s+\i t,z)=v(s,z)$ for all $s \in I$, then
\begin{equation}\label{Kiselman}
v(z)= \inf_{\tau \in S}v(\tau,z) 
\end{equation}
is either identically $-\infty$, or else plurisubharmonic on $D$.
\end{theorem}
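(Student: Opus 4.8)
The plan is to reduce the statement to the following two facts: (i) an infimum of a decreasing family of plurisubharmonic functions is plurisubharmonic, and (ii) the partial infimum $v(z)=\inf_{\tau\in S}v(\tau,z)$ can be realized as such a decreasing limit by exploiting the translation invariance $v(s+\i t,z)=v(s,z)$ in the imaginary direction. The key point is that $v$, being plurisubharmonic on $S\times D$ and independent of $t=\Im\tau$, is in particular subharmonic and hence convex in $s$ for fixed $z$ when restricted to each vertical slice — more precisely, for the infimum over $S$ we only need to take the infimum over the real interval $I$, since $v(s+\i t,z)=v(s,z)$.

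First I would record the standard reduction: for each compact subinterval $I'\Subset I$ and each $\eps>0$, consider the regularized function obtained by a small downward perturbation at the ends of $I'$, or more simply work with the family $\{v(s,\cdot)\}_{s\in I}$ directly. The main technical device is the \emph{attenuation/approximation} trick of Kiselman: one writes $v(z)$ as a decreasing limit of functions of the form
$$
v_\delta(z) = \inf_{\tau\in S}\Big( v(\tau,z) + \delta\,\rho(\tau)\Big)
$$
for a suitable plurisubharmonic exhaustion weight $\rho$ on the strip $S$ that grows at the boundary $\partial S = (\partial I)+\i\RR$ and is bounded below, chosen so that (a) the infimum in $\tau$ is attained on a compact subset of $S$ and is therefore a genuine (locally uniform) infimum, and (b) $v_\delta \downarrow v$ as $\delta \downarrow 0$. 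Because of the $t$-independence of $v$, one can choose $\rho$ to also be $t$-independent, i.e. $\rho(\tau)=\rho(s)$ a convex function on $I$ blowing up at $\partial I$; then $v(\tau,z)+\delta\rho(s)$ is plurisubharmonic on $S\times D$, proper in the $s$-variable, and the partial infimum $v_\delta$ is then plurisubharmonic on $D$ by the convex-analysis minimum principle applied slicewise together with upper semicontinuity and the submean-value inequality in $z$.

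The heart of the argument — and the step I expect to be the main obstacle — is establishing that $v_\delta$ is plurisubharmonic, i.e. that the partial infimum of a plurisubharmonic function over the strip variable (after the weight makes it proper) is again plurisubharmonic. The subtlety is that a naive slicewise infimum need not even be upper semicontinuous, and plurisubharmonicity is not purely a slicewise condition in the remaining variables $z$; one must genuinely use that $v$ is jointly plurisubharmonic on $S\times D$ and that, after adding $\delta\rho(s)$, for each $z_0$ the infimum is attained at some $s_0=s_0(z_0)\in I$ lying in a fixed compact set as $z_0$ ranges over a neighborhood. Then for a complex line $z=z_0+\zeta w$ in $D$ one tests the submean-value inequality: $v_\delta(z_0) \le v(s_0, z_0) + \delta\rho(s_0) \le \frac{1}{2\pi}\int_0^{2\pi} \big(v(s_0, z_0+e^{\i\theta} w)+\delta\rho(s_0)\big)\,d\theta$ using joint plurisubharmonicity of $v+\delta\rho$ along the horizontal disc $\{s_0\}\times\{z_0+e^{\i\theta}w\}$, and then bounds the integrand by $v_\delta(z_0+e^{\i\theta}w)$. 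Upper semicontinuity of $v_\delta$ follows from properness of the infimum plus upper semicontinuity of $v$. Finally, taking $\delta\downarrow 0$, the functions $v_\delta$ decrease to $v$; hence $v$ is either plurisubharmonic on $D$, or, if the limit is $-\infty$ somewhere, it is $\equiv -\infty$ by the submean-value property (a plurisubharmonic function equal to $-\infty$ on a nonpluripolar set — here an open set — on a connected domain is identically $-\infty$, and one checks the limit inherits enough of this). This completes the proof modulo the routine verification of the properties of the weight $\rho$ and the standard measure-theoretic manipulations with decreasing limits of plurisubharmonic functions.
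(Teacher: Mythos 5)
Your scaffolding (restrict the infimum to the real variable $s$, add a small convex weight $\delta\rho(s)$ blowing up at $\del I$ so that the infimum is attained at an interior point $s_0(z)$, then let $\delta\downarrow0$ and invoke the stability of plurisubharmonicity under decreasing limits) is the standard and correct outline, essentially Kiselman's. But the step you yourself flag as the heart of the matter --- verifying the sub-mean value inequality for $v_\delta$ --- is carried out with an inequality that points the wrong way, and this is precisely where the theorem's difficulty lives. From $v_\delta(z_0)=v(s_0,z_0)+\delta\rho(s_0)$ and subharmonicity of $\zeta\mapsto v(s_0,z_0+\zeta w)$ you obtain
\[
v_\delta(z_0)\ \le\ \frac{1}{2\pi}\int_0^{2\pi}\bigl(v(s_0,z_0+e^{\i\theta}w)+\delta\rho(s_0)\bigr)\,d\theta ,
\]
but the integrand satisfies $v(s_0,z_0+e^{\i\theta}w)+\delta\rho(s_0)\ \ge\ \inf_s\bigl(v(s,z_0+e^{\i\theta}w)+\delta\rho(s)\bigr)=v_\delta(z_0+e^{\i\theta}w)$; it bounds $v_\delta$ from \emph{above} at the circle points, not from below. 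You therefore have $v_\delta(z_0)\le A$ and $A\ge\frac{1}{2\pi}\int v_\delta$, which cannot be chained into the sub-mean value inequality. Note that your computation uses only plurisubharmonicity of the single slice $v(s_0,\cdot)$, never the joint plurisubharmonicity on $S\times D$ nor the $t$-independence; were it valid, it would show that \emph{any} locally attained infimum of a family of plurisubharmonic functions is plurisubharmonic, which is false --- indeed the introduction of the paper emphasizes that the survival of an infimum is exactly the ``unexpected feature'' here.

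The missing ingredient is a second-order argument at the critical point. In the smooth, strictly plurisubharmonic case, $t$-independence gives $v_{\tau\bar\tau}=\tfrac14 v_{ss}>0$, so $s_0(z)$ is $C^1$ by the implicit function theorem, and differentiating $g(z)=v(s_0(z),z)$ twice yields the Schur complement
\[
g_{z_j\bar z_k}\ =\ v_{z_j\bar z_k}-\frac{v_{z_j s}\,v_{s\bar z_k}}{v_{ss}},
\]
which is positive semidefinite precisely because the full complex Hessian of $v$ on $S\times D$ is; this is where joint plurisubharmonicity and $t$-independence genuinely enter. (This is the complex analogue of Lemma \ref{MinLemma} and Corollary \ref{detHessCor}, whose real Schur-complement computation plays the identical role in the proof of Theorem \ref{mainThm}.) The general case then follows by your regularization and decreasing-limit scheme, or alternatively by Kiselman's original partial Legendre transform argument. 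As written, however, the central step of your proof has a genuine gap.
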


Commonly, the supremum of a family of subsolutions of an equation (in this case, plurisubharmonic functions
are subsolutions for the  homogeneous complex Monge--Amp\`ere equation) is again a subsolution. The unexpected feature of this result is that the same can be said about an infimum. As one might expect, this has important implications to certain partial differential equations (PDE) and (pluri)potential theory \cite{Kiselman,Kiselman-notes}.

One natural way to generalize Kiselman's principle would be to consider classes of functions
other then convex or plurisubharmonic functions. A natural candidate is given by the notion of a subequation introduced by Harvey--Lawson, and in a different guise by Slodkowski
\cite{HL,Slodkowski,Nirenberg}.
A subequation is, roughly, a class of functions that serve as subsolutions for an elliptic PDE of second order.
However, it turns out that a simpleminded generalization is false for general subequations.

What we achieve in this note is 
a minimum principle for a particular subequation of interest in Lagrangian geometry. 
The result is stated in Theorem \ref{mainThm}.
The interest in this subequation comes from the associated PDE. 
In the case of convex/plurisubharmonic functions the associated equation is the homogeneous real/complex Monge--Amp\`ere equation, and the minimum/Kiselman principle has important implications to the study of its solutions, as shown recently by 
Ross--Witt Nystr\"om and the 
authors \cite{RWN,DR}. In the case studied in this article, the associated equation is the degenerate special Lagrangian equation (DSL) introduced recently by Rubinstein--Solomon \cite{RS}.
Inspired by the main result of \cite{DR}, we show how the minimum principle established in this
article can be applied to the study of the DSL. In particular, in Theorem \ref{geodThm} we derive a new formula for the 
weak solutions of the DSL constructed in  \cite[Theorem 1.2]{RS}. 

Our results can be viewed in the framework of a program initiated in \cite{RS} to develop a potential theory for the (degenerate) special Lagrangian equation and weak geodesics in the space of positive
Lagrangians with a view towards the strong Arnold conjecture \cite[\S2]{RS}
and as part of a program initiated by Solomon \cite{S1,S2} (see also \cite{SY}) to understand the existence and uniqueness of special Lagrangian submanifolds in Calabi--Yau manifolds.

This note is organized as follows.
In Section \ref{sec: preliminaries} we recall the subequations corresponding to the special Lagrangian equation and to the 
degenerate special Lagrangian equation \cite{HL,RS}. 
Section \ref{sec: MainThm} is devoted to the proof of the minimum principle for Lagrangian graphs
(Theorem \ref{mainThm}). In 
Section \ref{sec: applications} we give the proof of Theorem \ref{geodThm} concerning solutions of the DSL.

\section{The special Lagrangian subequation and the degenerate special Lagrangian subequation}
\label{sec: preliminaries}

This section recalls basic notions from \cite{HL,RS}.

\subsection{Subequations}
\label{}
A subequation is a proper closed subset
$F$ of the set of $m$-by-$m$ symmetric matrices 
that is invariant under translation by positive matrices. 
A subequation $F$ is said to be associated to a PDE
of the form 
\begin{equation}\label{eq: Hessianeq}
f(\nabla^2u(x))=0, 
\q x \in U\subset\RR^m,
\end{equation}
if $C^2(U)$ solutions
 of the equation satisfy $\nabla^2u(x)\in\del F$
for each $x\in U$. A subequation $F$ gives rise to a natural notion of subsolutions,
also called functions of type $F$, denoted $F(U)$. 
Namely, $u \in C^2(U)$ is a subsolution,
denoted $u\in F(U)$, if $\nabla^2u(x) \in F$ for all $x \in U.$ However,
elements of $F(U)$ are typically only upper semicontinuous and are defined using a viscosity type condition, as we detail below. 
These functions are 
the key object in a so-called potential theory associated to the PDE \eqref{eq: Hessianeq},
in a similar way to, e.g., subharmonic functions and the Laplace equation, or plurisubharmonic functions and the homogeneous complex Monge--Amp\`ere equation.

A subequation $F$ gives rise to a weak version of the Dirichlet problem for each domain $U \subset \RR^m.$ \HL show existence and uniqueness of continuous solutions to the $F$-Dirichlet problem under certain assumptions on the boundary of $U.$ Making connection with the classical theory, if the continuous solution is in $C^2(U)$, it must be a solution in the classical sense. 

Let us recall in more detail the 
basic notions and notation concerning subequations, following Harvey--Lawson  \cite{HL}.
Denote by $\Sym(\R^m)$ the set of all symmetric 
$m$-by-$m$ matrices, and by $\P$ the subset of nonnegative matrices.
A proper nonempty closed subset $F$ of $\Sym(\R^m)$ is a {\it subequation} 
if  \cite[Definition 3.1]{HL}
\beq
\lb{DirichletSetPropEq}
F+\P\subset F.
\eeq
Denote by $\Int S$ the interior of a set $S$, and 
by $S^c$ its complement.
By $\widetilde F$ we denote the {\it dual set to $F$},  which is also a subequation, and is defined by
$$
\tF:=(-\Int F)^c.
$$
A function $u \in \USC(D)$ is {\it subaffine}, denoted $u\in\SA(D)$, if for all affine functions $a$ and $K\subset D$ compact, $u\le a$ on $\del K$ implies $u\le a $ on $K$. \HL prove that
such functions satisfy the maximum principle~\cite[Proposition 2.3]{HL},
\begin{equation}\label{eq:mp}
\h{\rm if \ \ } u \in \SA(D) \h{\ \ \  then \ \ } \sup_{\overline{D}} u =\sup_{\partial D} u.
\end{equation}
A function $u\in \USC(D)$ is of {\it type $F$}, denoted $u\in F(D)$, if $u+v\in\SA(D)$
for all $v\in C^2(D)$ satisfying $\n^2 v(x)\in \tF$, for all $x\in D$. 

From now on, unless stated otherwise, we assume that $D$ is bounded. 
The elements of $F(D)$ serve as subsolutions to the PDE associated to $F$.
Similarly to subharmonic functions they satisfy many useful properties, 
due to Harvey--Lawson \cite{HL},  that we will use repeatedly. The reader may find the useful list of most of the properties we will make use of in  \cite[\S6]{RS}.

To give classical examples, the subequation whose associated subsolutions are convex functions is 
the set of nonnegative matrices, while plurisubharmonic functions are associated to
the subequation of nonnegative Hermitian matrices.

\subsection{The special Lagrangian subequation}
\label{}

A family of subequations associated to 
all branches of the special Lagrangian equation was introduced by Harvey--Lawson, 
\begin{equation*}
F_c:=\{A\in\Symn\,:\,\tr\tan^{-1}A\ge c\}.
\end{equation*}
Here, $-{n}\pi/2 < c < {n}\pi/2$ and the dual subequation is $\tilde F_c=F_{-c}$
\cite[Proposition 10.4]{HL}.

There is a relation between the subequation $F_c$ and the Lagrangian angle
of a Lagrangian graph. Indeed,  
the restriction of the form $dz^1~\w~\cdots~\w~dz^n$ to the Lagrangian graph 
$\{(x,\nabla u(x))\,:\, x\in\RR^n\}$ is equal to the volume form induced on the graph from the Euclidean metric on $\RR^{2n}$, up to a unit complex number (that depends on $x$)
\cite[Proposition 1.14, p. 89]{HL82}.
The argument of that number, denoted $\theta_u(x)\in S^1$ is called the Lagrangian angle at $(x,\nabla u(x))$, and a computation shows that
\begin{equation}
\begin{aligned}
\label{}
\th_u(x)= \arg \det(I + \i  \nabla^2u(x)).
\end{aligned}
\end{equation}
Here, we consider $S^1$ as an abelian group and use additive notation for the group law and the inverse.
Also,  we let $\arg$ denote the branch of the argument function with image in $(-\pi,\pi]$
(i.e., the branch whose domain is the complex plane minus the nonnegative real axis).
Then $\tan^{-1}\la:=\arg(1+\i\la)$,
for $\la\in \R$, where $\tan^{-1}$ denotes the branch of 
the inverse to $\tan$ with image in $(-\pi/2,\pi/2)$.

Motivated by this, one defines the Lagrangian angle of the symmetric matrix $A$ 
\[
\theta: \Symn \to S^1,
\]
 by
\[
\theta(A):= \arg \det(I + \i A).
\]
For $B \in \Sym(\CC^m),$ denote by $
\spec(B)\subset \CC$ 
the set of its eigenvalues, and for $\lambda \in \spec(B),$ denote by $
m(\lambda)$ 
the multiplicity of $\lambda$. Then, 
$$
\arg B=\sum_{\la\in\spec(B)}m(\la)\arg\la.
$$
One defines the lifted Lagrangian angle
\begin{equation}
\label{eq:wtth}
\wtth : \Symn \to \RR,  \qquad
\wtth(A):= 
\tr\arg(I+\i A).
\end{equation}
The name is justified by the fact that $\th\equiv \tilde\th \,\mod 2\pi$.
Observe that   \eqref{eq:wtth} makes sense since the eigenvalues of $I+\i A$ all have real part equal to one,
so these eigenvalues are all in the domain of $\arg$. Thus,
$$
F_c=\{A\in\Symn\,:\, \wtth(A)\ge c\}.
$$ 

The relation between the subequation $F_c$ and the special Lagrangian potential equation is as follows.
First, a function $v\in C^2(D)$ is said to solve 
the special Lagrangian potential equation of phase $c$ if its associated 
(lifted) Lagrangian angle is constant and equal to $c$, i.e.,
\begin{equation}
\begin{aligned}
\label{SL1Eq}
\wtth_v(x)=\tr\arg(I+\i\nabla^2 v(x))
&=c, \q \forall x\in D.
\end{aligned}
\end{equation}
From the definitions it then follows that 
a function $v\in C^2(D)$ satisfies  \eqref{SL1Eq} if and only if 
$ v\in F_c(D)\cap(-F_{-c}(D))\cap C^2(D)$.
Motivated by this, a function $v$ is said to be a $C^0$ weak solution of the special Lagrangian equation 
if 
$ 
v\in F_c(D)\cap(-F_{-c}(D))\cap C^0(D)$.

For a short summary of some of the key potential theoretic results of Harvey--Lawson \cite{HL} concerning $F_c(D)$, we refer to \cite[Section 6.4]{RS}

\subsection{The degenerate special Lagrangian subequation}
\label{DLSSubSec}

In recalling the constructions of \cite{RS}, we set the following notation.
For 
$$
C=[c_{ij}]_{i,j=1}^{n+1}\in \textup{Sym}(\Bbb C^{n+1}),
$$ we will  make frequent use of the block decomposition
\begin{equation*}
\begin{aligned}
\label{}
C=
\begin{pmatrix}
c_{00}&\vec c_0\cr
\vec c_0^T& C^+\cr
\end{pmatrix},
\end{aligned}
\end{equation*}
where $c_{00} \in \Bbb C, \vec c_0 \in \Bbb C^n$ and 
$C^+ \in \textup{Sym}^2({\Bbb C}^n)$. 
For $\eta\ge0$, write
\begin{equation}
\begin{aligned}
\label{IetaEq}
I^\eta_{n}:=\diag(\eta,1,\ldots,1)\in\Sym(\R^{n+1}).
\end{aligned}
\end{equation}
We also denote 
\begin{equation}
\begin{aligned}
\label{}
I_{n}:=I_{n}^0=\diag(0,1,\ldots,1)\in\Sym(\R^{n+1}),
\end{aligned}
\end{equation}
\beq
\label{calSEq}
\calS=\{A\in \textup{Sym}(\Bbb R^{n+1}): \ \ \det(I_{n}+\i A)=0 \}.
\eeq
It follows from \cite[Lemma 3.4]{RS} that in fact
$$
\calS=\{A\in \textup{Sym}^2(\Bbb R^n): \ A = \textup{diag}(0,A^+) \}.
$$ The reason why this set has special significance comes from the fact that the space-time Lagrangian angle $
\Theta: \textup{Sym}^2(\Bbb R^{n+1}) \sm\calS \to S^1$ defined by
\[
 \Theta(A) := \arg\det(I_{n} + \i A),
\]
does not extend continuously to $\cal S$, though it is smooth on $\textup{Sym}^2(\Bbb R^{n+1}) \setminus \cal S$. Fortunately, when considering $\wtTh$, the lift of the space-time Lagrangian angle to $\Bbb R$, it is possible to find a well-behaved upper semicontinuous extension to $\textup{Sym}^2(\Bbb R^{n+1})$, given by
\begin{equation}\label{eq:whTh}
\wtTh(A) := 
\begin{cases}
\sum_{\lambda \in \spec(I_{n} + \i A)} m(\lambda)\arg(\lambda),& A\in \textup{Sym}^2(\Bbb R^{n+1}) \sm \calS,\\
\pi/2 + \sum_{0\neq\lambda \in \spec(I_{n} + \i A)} m(\lambda)\arg(\lambda),&  A\in\calS,
\end{cases}
\end{equation}
where $m(\lambda)$ is the multiplicity of the eigenvalue $\lambda$.
More precisely,  the following is known
\cite[Theorem 3.1]{RS}.

\begin{theorem}
\lb{USCThm}
The function $\wtTh$  is the smallest upper semicontinuous function on $\Sym(\Bbb R^{n+1})$ 
extending $\wtTh|_{\Sym(\Bbb R^{n+1})\sm \calS}.$
\end{theorem}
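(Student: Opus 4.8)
The plan is to verify the two defining properties of a smallest usc extension: first that $\wtTh$ as defined in \eqref{eq:whTh} is itself upper semicontinuous on all of $\Sym(\RR^{n+1})$, and second that any usc function $g$ on $\Sym(\RR^{n+1})$ that agrees with $\wtTh$ off $\calS$ satisfies $g \ge \wtTh$ on $\calS$ as well. Since $\wtTh$ is smooth (in particular continuous) on the open set $\Sym(\RR^{n+1}) \sm \calS$, the only points where either claim needs work are the matrices $A \in \calS$.

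First I would analyze the eigenvalue behavior near a point $A_0 \in \calS$. Write $A_0 = \diag(0, A_0^+)$ with $A_0^+ \in \Sym^2(\RR^n)$, so that $I_n + \i A_0$ has the block form $\diag(0, I + \i A_0^+)$, i.e.\ it has a simple eigenvalue $0$ (coming from the degenerate $(0,0)$-slot, recalling $\det(I_n + \i A) = 0$ precisely detects this) together with the $n$ eigenvalues $1 + \i \lambda_j$ of $I + \i A_0^+$, all of which have positive real part and hence lie in the domain of $\arg$. For $A$ near $A_0$ (including $A \in \calS$), continuity of the spectrum shows that $n$ of the eigenvalues of $I_n + \i A$ stay near the $1 + \i \lambda_j$ and depend continuously on $A$, contributing $\sum_{j} \arg(1 + \i\lambda_j)$ plus a small error; this part is harmless. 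The subtle eigenvalue is the one, call it $\mu(A)$, that is near $0$. Off $\calS$ we have $\mu(A) \ne 0$, and the key geometric fact — which I would extract from \cite[Lemma 3.4]{RS} and a direct computation of the relevant $2\times 2$ block perturbation — is that as $A \to A_0$ through $\Sym(\RR^{n+1}) \sm \calS$, the point $\mu(A) \in \CC \sm \{0\}$ can approach $0$ from any direction, so $\arg \mu(A)$ has no limit; however, because the real part of $I_n + \i A$ in the relevant entry is an order of magnitude smaller than the imaginary part when $A$ is close to $\calS$ (the $(0,0)$-entry of $I_n$ being $0$), one shows $\arg \mu(A)$ is pinned to the interval roughly $[0,\pi]$, or more precisely that $\liminf_{A \to A_0} \arg \mu(A) \ge$ the value $\arg$ would take coming ``tangentially,'' which works out to $\pi/2$ for the contribution at $A_0$ itself. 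Concretely: $I_n + \i A$ near $A_0$ has, in the degenerate slot, a tiny positive real part $a_{00}$ and imaginary part $\approx a_{00}'$; as $a_{00} \to 0^+$ the argument of the nearby eigenvalue tends to $0$ if the perturbation pushes it into the right half-plane and to $\pi$ if into the left, with $\pi/2$ as the boundary case, so that the smallest possible limiting value of $\wtTh$ along sequences approaching $A_0$ is exactly $\pi/2 + \sum_{0 \ne \lambda} m(\lambda) \arg \lambda$.

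From this analysis both halves follow. For upper semicontinuity at $A_0 \in \calS$: given a sequence $A_k \to A_0$, split it into the subsequence lying in $\calS$ and the one lying outside; on the $\calS$-part $\wtTh(A_k) = \pi/2 + \sum_{0\ne\lambda}m(\lambda)\arg\lambda$ evaluated at $A_k$, which is continuous in $A_k$ within $\calS$ (the nonzero eigenvalues vary continuously and stay in the domain of $\arg$), so $\limsup$ equals the value at $A_0$; on the off-$\calS$ part the eigenvalue analysis above gives $\limsup_k \arg\mu(A_k) \le \pi/2$ — I must check the sign of this inequality carefully, which is the crux — hence $\limsup_k \wtTh(A_k) \le \wtTh(A_0)$. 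Combining, $\limsup_{A\to A_0}\wtTh(A) \le \wtTh(A_0)$, which is usc. For minimality: let $g$ be usc with $g = \wtTh$ on $\Sym(\RR^{n+1})\sm\calS$. Fix $A_0 \in \calS$; the analysis produces a sequence $A_k \to A_0$ in $\Sym(\RR^{n+1}) \sm \calS$ along which $\arg \mu(A_k) \to \pi/2$ (the ``tangential'' approach, e.g.\ perturbing the $(0,0)$-entry purely in the imaginary direction so that $\mu(A_k) \approx \i \epsilon_k$ with $\epsilon_k \to 0$), hence $\wtTh(A_k) \to \wtTh(A_0)$. Upper semicontinuity of $g$ then forces $g(A_0) \ge \limsup_k g(A_k) = \limsup_k \wtTh(A_k) = \wtTh(A_0)$. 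So $g \ge \wtTh$ everywhere, and since $\wtTh$ itself is usc and extends $\wtTh|_{\Sym(\RR^{n+1})\sm\calS}$, it is the smallest such extension.

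\textbf{Main obstacle.} The delicate point is the precise control of $\arg$ of the near-zero eigenvalue of $I_n + \i A$ as $A \to A_0 \in \calS$: one needs both that its $\limsup$ is at most $\pi/2$ (for usc of $\wtTh$) and that the value $\pi/2$ is actually attained in the limit along some admissible sequence (for minimality). This requires a careful local normal-form / perturbation computation on the $2\times2$ block governing the degenerate eigenvalue — identifying which directions of perturbation within $\Sym(\RR^{n+1})$ move $\mu(A)$ into the upper half-plane versus lower half-plane, and confirming that the constraint ``real part of the $(0,0)$ entry of $I_n$ is zero'' is exactly what caps the argument at $\pi$ and makes $\pi/2$ the generic tangential limit. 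Everything else (continuity of the regular eigenvalues, the splitting-into-subsequences argument, the abstract usc manipulations) is routine once this eigenvalue estimate is in hand; I expect the paper invokes \cite[Lemma 3.4]{RS} to streamline exactly this computation.
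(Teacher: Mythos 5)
First, note that the paper itself does not prove this statement: it is quoted verbatim from \cite[Theorem 3.1]{RS}, so there is no in-paper proof to compare against. Judged on its own terms, your skeleton is the right one (establish usc at points of $\calS$, then exhibit a sequence in $\Sym(\RR^{n+1})\sm\calS$ along which $\wtTh$ converges to the assigned value $\pi/2+\sum_{0\ne\lambda}m(\lambda)\arg\lambda$), and your witness sequence for minimality is correct: perturbing only the $(0,0)$-entry of $A_0=\diag(0,A_0^+)$ by $\eps_k>0$ makes $I_n+\i A$ block-diagonal with exact eigenvalue $\i\eps_k$, whose argument is exactly $\pi/2$, so $\wtTh(A_k)=\wtTh(A_0)$ for every $k$.

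The genuine gap is the estimate you yourself flag as the crux: that the vanishing eigenvalue $\mu(A)$ satisfies $\limsup_{A\to A_0,\,A\notin\calS}\arg\mu(A)\le\pi/2$. You assert it but do not prove it, and the heuristic you offer is wrong in a way that matters: you claim $\arg\mu(A)$ is ``pinned to the interval roughly $[0,\pi]$,'' but if values near $\pi$ were attainable then $\limsup\wtTh$ would exceed $\wtTh(A_0)=\wtth(A_0^+)+\pi/2$ and upper semicontinuity would \emph{fail}. The correct statement is that $\mu(A)$ lies asymptotically in the closed \emph{right} half-plane, so $\arg\mu(A)\in[-\pi/2,\pi/2]+o(1)$. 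This does not follow from inspecting the $(0,0)$-entry alone; it comes from the Schur-complement factorization
\[
\det(I_n+\i A)=\det(I+\i A^+)\cdot\bigl(\i a_{00}+\vec a_0(I+\i A^+)^{-1}\vec a_0^T\bigr),
\]
together with the fact that $\Re\bigl((I+\i A^+)^{-1}\bigr)$ is positive definite, which forces the second factor to have nonnegative real part. This is exactly the content of Lemma \ref{AAplusLemma} (i.e.\ \cite[Lemmas 3.6--3.7]{RS}) combined with Lemma \ref{HLPosLemma}(i), and with it the whole argument collapses to two lines: $\wtTh(A)=\wtth(A^+)+\arg z(A)$ with $\arg z(A)\in[-\pi/2,\pi/2]$ off $\calS$, $\wtth(A^+)$ is continuous in $A$, and $\arg z=\pi/2$ is attained along $a_{00}=\eps_k$, $\vec a_0=0$. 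Note also that the identity is needed as an exact equality in $\RR$, not merely mod $2\pi$; your ``sum of arguments of the regular eigenvalues plus a small error'' bookkeeping would have to be carried out carefully to recover this. Without supplying the right half-plane fact, the proof is incomplete, and with the interval $[0,\pi]$ as stated it is incorrect.
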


For $c\in(-(n+1)\pi/2,(n+1)\pi/2)$, define $\mathcal F_{c}$ by
\beq
\label{calFcEq}
\calF_{c}
:=\big\{A \in \Sym(\R^{n+1})\,:\, 
\wtTh(A)\geq c
\big\}.
\eeq
From the semicontinuity of $\wtTh$ it follows that $\mathcal F_c$ is closed. The property $\mathcal F_c + \mathcal P \subset \mathcal F_c$ is 
proved in \cite[Lemma 5.3]{RS}, yielding that 
$\mathcal F_c$ is a subequation. Additionally, 
the dual subequation satisfies 
$\widetilde {\mathcal F}_c = \mathcal F_{-c}$ \cite[Lemma 5.5]{RS}.

The relation between the subequation $\mathcal F_c$ and the degenerate special Lagrangian potential equation is as follows.
Given an open set $D \subset \Bbb R^n$, an open interval $I \subset \Bbb R$, and a function $v\in C^2((0,1)\times D)$, $v$ is said to solve 
the degenerate special Lagrangian potential equation (DSL) of phase $c$ if its associated  (lifted) space-time Lagrangian angle is constant and equal to $c$, i.e.,
\begin{equation}
\begin{aligned}
\label{DSL1Eq}
\wtTh_v(t,x)=\tr\arg(I_n+\i\nabla^2 v(t,x))
&=c, \q \forall (t,x)\in I \times D.
\end{aligned}
\end{equation}
From the definitions it then follows that 
a function $v\in C^2(I \times D)$ satisfies  \eqref{DSL1Eq} if and only if 
$ v\in \mathcal F_c(I \times D)\cap(-\mathcal F_{-c}(I \times D))\cap C^2(I \times D)$.
Motivated by this, a function $v$ is said to be a $C^0$ weak solution of the degenerate special Lagrangian equation 
if 
$$ 
v\in \mathcal F_c(I \times D)\cap(-\mathcal F_{-c}(I \times D))\cap C^0(I \times D).
$$

\section{The minimum principle}
\label{sec: MainThm}

The main result of this section is the following minimum principle for Lagrangian graphs.

\begin{theorem}
\label{mainThm}
Let $D \subset \RR^n$ be a bounded domain and let $I\subset \RR$ be an open interval. Let $u\in\calF_{c}(I\times D)$ with $c \in [n\pi/2,(n+1)\pi/2)$. 
Then
\begin{equation}\label{Kiselman}
v(x):= \inf_{t \in I}u(t,x)
\end{equation}
belongs to $F_{c-\pi/2}(D)$.
\end{theorem}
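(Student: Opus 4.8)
The plan is to mimic the viscosity-theoretic proof of Kiselman's principle, but adapted to the Harvey--Lawson formalism. First I would reduce to a local, smooth statement: since $v = \inf_{t\in I} u(t,\cdot)$ is an infimum of a family of functions that are (after a standard regularization/approximation argument) uniformly bounded above and upper semicontinuous, $v$ is upper semicontinuous, and to show $v\in F_{c-\pi/2}(D)$ it suffices to show $v + \psi \in \SA(D)$ for every $\psi\in C^2(D)$ with $\nabla^2\psi(x)\in\widetilde{F_{c-\pi/2}} = F_{-(c-\pi/2)} = F_{\pi/2-c}$. Equivalently, by the general theory of Harvey--Lawson I would verify the viscosity condition pointwise: if $\varphi$ is a $C^2$ function touching $v$ from above at $x_0\in D$, then $\nabla^2\varphi(x_0)\in F_{c-\pi/2}$, i.e. $\wtth(\nabla^2\varphi(x_0)) = \tr\arg(I+\i\nabla^2\varphi(x_0)) \ge c - \pi/2$.

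So the key step is the pointwise linear-algebra inequality. Suppose $\varphi$ touches $v$ from above at $x_0$. Then for every $t\in I$, $\varphi(x) + \epsilon|x-x_0|^2$ still dominates $u(t,\cdot)$ near $x_0$ for the appropriate comparison; more precisely one uses that $v(x_0) = \inf_t u(t,x_0)$ is nearly attained (or attained in the limit) at some $t_0$, and then the function $(t,x)\mapsto \varphi(x) - u(t,x)$ (extended trivially in $t$, using the $t$-independence that will come from the DSL structure — here $u$ genuinely depends on $t$, so instead I would consider test functions of the form $\varphi(x) + \delta(t-t_0)^2$) has a local max in $(t,x)$ at $(t_0,x_0)$. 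Since $u\in\calF_c(I\times D)$, the viscosity subsolution property gives $\wtTh\big(\nabla^2[\varphi(x)+\delta(t-t_0)^2]\big)\ge c$ at $(t_0,x_0)$, i.e. $\wtTh\begin{pmatrix} 2\delta & 0 \\ 0 & \nabla^2\varphi(x_0)\end{pmatrix}\ge c$. Letting $\delta\to 0^+$ and using upper semicontinuity of $\wtTh$ together with its explicit value on $\calS$ from \eqref{eq:whTh}, the block at the $(0,0)$ slot degenerates to $0$, so $\nabla^2\varphi(x_0)$ contributes via the bottom-right $n\times n$ block and one obtains
\begin{equation*}
\pi/2 + \wtth(\nabla^2\varphi(x_0)) = \pi/2 + \tr\arg(I+\i\nabla^2\varphi(x_0)) \ \ge\ c,
\end{equation*}
which is exactly $\wtth(\nabla^2\varphi(x_0))\ge c-\pi/2$. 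The hypothesis $c\ge n\pi/2$ is what makes $c-\pi/2$ lie in the admissible range $[-(n-1)\pi/2 - \text{something}, n\pi/2)$ — more precisely it ensures $c - \pi/2 \in [(n-1)\pi/2, n\pi/2)$ so that $F_{c-\pi/2}$ is a genuine (non-vacuous, proper) special Lagrangian subequation on $\Symn$, and it is also what forces the relevant eigenvalue configurations so the ``$+\pi/2$'' branch jump in \eqref{eq:whTh} is the correct one.

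I expect the main obstacle to be making the degeneration argument rigorous at the level of viscosity test functions rather than $C^2$ solutions: one must handle the infimum $\inf_{t\in I} u(t,x)$ over a non-compact open interval $I$ (the infimum may only be approached as $t$ tends to an endpoint), which requires either an a priori bound ensuring the infimum is attained on a compact subinterval, or a limiting argument with a sequence $t_j$ and test functions whose Hessians converge; controlling the $(0,0)$-entry $2\delta$ and the off-diagonal coupling as $\delta\to 0$ while keeping the touching-from-above property intact is the delicate point. A secondary technical issue is that $v$ could a priori be $\equiv -\infty$ or fail to be upper semicontinuous on all of $D$; I would dispatch this exactly as in Kiselman's and Harvey--Lawson's treatments, noting that $u$ is locally bounded below on $I\times D$ by subaffinity-type estimates (or restricting to where $v>-\infty$, which is then shown to be all of $D$ or handled by the convention in the statement), and that a decreasing-in-some-sense or equicontinuity argument gives upper semicontinuity of the infimum. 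Once these points are settled, the conclusion $v\in F_{c-\pi/2}(D)$ follows by collecting the pointwise viscosity inequalities and invoking the Harvey--Lawson characterization of $F(D)$ via test functions.
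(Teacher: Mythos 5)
There is a genuine gap at the central step: the transfer of the test function from $v$ to $u$. If $\varphi\in C^2$ touches $v=\inf_{t\in I}u(t,\cdot)$ from above at $x_0$, you know $u(t,x)\ge v(x)$ and $v(x)\le \varphi(x)$ near $x_0$; these two inequalities point in opposite directions and give no comparison between $u$ and $\varphi(x)+\delta(t-t_0)^2$. Domination of an infimum does not yield domination of the individual functions $u(t,\cdot)$ (that implication is only valid for suprema), and indeed for $x\ne x_0$ the infimum defining $v(x)$ is generally attained at some $t(x)\ne t_0$, so $u(t_0,x)$ may exceed $\varphi(x)$ already on the slice $t=t_0$; in the $t$-direction, $u$ is merely convex (Lemma \ref{cvxLemma}) and may grow like $|t-t_0|$ off the minimizing slice, defeating every quadratic. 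So $(t,x)\mapsto\varphi(x)+\delta(t-t_0)^2$ need not touch $u$ from above at $(t_0,x_0)$ for any $\delta>0$, and the viscosity inequality for $u$ cannot be invoked. This is exactly why infima of subsolutions fail to be subsolutions for general subequations, as the introduction points out; any proof that reduces to a pointwise inequality for a $t$-decoupled test function proves too much.

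A second, related defect: even if the touching held, your test function has vanishing mixed block $\nabla^2_{tx}$, so you would only compare $\nabla^2\varphi(x_0)$ with $\diag(2\delta,\nabla^2\varphi(x_0))$. But the Hessian of the marginal function is the Schur complement $\nabla^2_x u-\frac{1}{\ddot u}(\nabla^2_{tx}u)^T\nabla^2_{tx}u$ (Lemma \ref{MinLemma}), which is \emph{smaller} than $\nabla^2_x u$; since subequations are only monotone under adding $\P$, this correction works against you, and it is the determinant identity \eqref{DetFormulaEq} in Corollary \ref{detHessCor}, giving $\wtth_g=\wtTh_f-\pi/2$ exactly (with the branch pinned down by Lemma \ref{AAplusLemma}), that rescues the statement. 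The paper accordingly argues differently: it exhausts $I\times D$ by precompact pieces, regularizes $u$ to smooth elements of $\calF_c$ that are strongly convex exhaustion functions of $t$ (Lemma \ref{regularizeLemma}, plus adding $\frac1l f$ for a convex exhaustion $f$ of $I_k$), applies the classical implicit-function-theorem computation at the now-attained interior minimum, and passes to decreasing limits. Your correct observations (convexity in $t$, the value $\pi/2+\wtth(A^+)$ of $\wtTh$ on $\calS$, the role of $c\ge n\pi/2$) all reappear there, but the test-function transfer as you wrote it does not close the argument.
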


\begin{remark}
By definition, the function $-\infty$ belongs to $F(D)$ for any subequation $F$. This explains
the apparent difference in the conclusion of Theorem \ref{mainThm} from the classical minimum principle.
\end{remark}

The proof of Theorem \ref{mainThm} will occupy the rest of the present section.

We start with the following observation. It is essentially contained in   \cite[Lemmas 3.6--3.7]{RS}.

\begin{lemma}
\label{AAplusLemma}
	For all $A \in \Sym(\Bbb R^{n+1})$
\begin{equation}
\begin{aligned}
\label{wtThwtthEq}
\wtTh(A) - \wtth(A^{+}) = \arg\big(
\i a_{00} + \vec a_0(I + \i A^+)^{-1}\vec a_0^T)\in [-\pi/2,\pi/2].
\end{aligned}
\end{equation}
\end{lemma}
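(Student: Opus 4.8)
The plan is to compute $\det(I_n + \i A)$ via a Schur-complement/block-determinant expansion relative to the decomposition $A = \begin{pmatrix} a_{00} & \vec a_0 \\ \vec a_0^T & A^+ \end{pmatrix}$, and then compare the resulting argument with $\wtth(A^+) = \tr\arg(I+\i A^+)$. Recall $I_n = \diag(0,1,\dots,1)$, so $I_n + \i A = \begin{pmatrix} \i a_{00} & \i\vec a_0 \\ \i\vec a_0^T & I + \i A^+ \end{pmatrix}$.

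First I would treat the generic case $A \notin \calS$, so that $I + \i A^+$ is invertible (this is exactly the condition $\det(I_n+\i A)\ne 0$ together with invertibility of the lower block; one should check these coincide off $\calS$, which follows from the block-determinant formula itself). The standard Schur formula gives
\[
\det(I_n + \i A) = \det(I + \i A^+)\cdot\big(\i a_{00} - (\i\vec a_0)(I+\i A^+)^{-1}(\i\vec a_0^T)\big) = \det(I + \i A^+)\cdot\big(\i a_{00} + \vec a_0(I+\i A^+)^{-1}\vec a_0^T\big).
\]
Taking $\arg$ of both sides and using additivity of $\arg$ modulo $2\pi$ together with the definitions of $\wtTh$ and $\wtth$ (as sums $\sum m(\lambda)\arg\lambda$ over the spectrum of $I_n+\i A$ and $I+\i A^+$ respectively), one gets $\wtTh(A) - \wtth(A^+) \equiv \arg\big(\i a_{00} + \vec a_0(I+\i A^+)^{-1}\vec a_0^T\big) \pmod{2\pi}$. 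The scalar $\vec a_0(I+\i A^+)^{-1}\vec a_0^T$ should be shown to have nonnegative real part: writing $(I+\i A^+)^{-1} = (I - \i A^+)(I + (A^+)^2)^{-1}$ using that $A^+$ is symmetric, its Hermitian (real) part is $(I+(A^+)^2)^{-1}$, which is positive definite; hence $\Re\big(\vec a_0(I+\i A^+)^{-1}\vec a_0^T\big) \ge 0$, so $\i a_{00} + \vec a_0(I+\i A^+)^{-1}\vec a_0^T$ lies in the closed right half-plane, and thus its principal argument lies in $[-\pi/2,\pi/2]$. Combining this with the fact that both $\wtTh(A)$ and $\wtth(A^+)$ are defined as genuine real-valued sums of arguments (not just mod $2\pi$), the congruence upgrades to an equality once we know the left side lies in $(-\pi,\pi]$ — which it does, being the argument of a right-half-plane point — giving \eqref{wtThwtthEq} for $A\notin\calS$.

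Then I would handle the boundary case $A\in\calS$. Here $A = \diag(0,A^+)$ with $\vec a_0 = 0$ and $a_{00}=0$, so the right-hand side of \eqref{wtThwtthEq} is $\arg(0)$; one checks directly that by the second branch of the definition \eqref{eq:whTh}, $\wtTh(A) = \pi/2 + \sum_{0\ne\lambda}m(\lambda)\arg\lambda$ where the sum is over nonzero eigenvalues of $I_n + \i A = \diag(0, I+\i A^+)$, i.e. over $\spec(I+\i A^+)$ (all of which are nonzero since $\Re = 1$), so $\wtTh(A) = \pi/2 + \wtth(A^+)$. Thus the identity \eqref{wtThwtthEq} holds with the convention $\arg(0):=\pi/2$, consistent with upper semicontinuity of $\wtTh$ (Theorem \ref{USCThm}) — one may either adopt this convention or simply note that $\vec a_0 = 0$ forces the scalar to be $0$ and interpret the statement as the limiting value; the cleanest write-up is to note that on $\calS$ the formula reads $\pi/2$ on both sides.

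\textbf{Main obstacle.} The routine part is the Schur expansion; the delicate point is bookkeeping the \emph{lift} — i.e., that the equality holds on the nose in $\RR$, not merely modulo $2\pi$ — which requires knowing that the principal argument of the right-hand scalar lands in $[-\pi/2,\pi/2]$ (hence in $(-\pi,\pi]$) and that this pins down the correct $2\pi$-representative of $\wtTh(A)-\wtth(A^+)$. This in turn rests on the positive-definiteness of $\Re(I+\i A^+)^{-1}$, which is the one genuine computation, and on carefully matching the $\calS$-branch of \eqref{eq:whTh} with the degenerate limit $\vec a_0\to 0$. Since the statement says this is "essentially contained in \cite[Lemmas 3.6--3.7]{RS}," I expect the actual proof to be a short reduction to those lemmas rather than a from-scratch argument.
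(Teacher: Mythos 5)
Your overall strategy (Schur complement with respect to the lower-right block, the real-part computation for $(I+\i A^{+})^{-1}$, and the separate treatment of $\calS$) is the right one; note that the paper itself gives no proof, deferring to \cite[Lemmas 3.6--3.7]{RS}, and the remark following \eqref{DetFormulaEq} confirms that the intended argument does use exactly the block expansion you wrote down. The computation $\Re\big((I+\i A^{+})^{-1}\big)=(I+(A^{+})^{2})^{-1}>0$ is correct and correctly yields that the scalar lies in the closed right half-plane, vanishing precisely on $\calS$. However, there is a genuine gap at the step you yourself flag as the delicate one: you claim the congruence modulo $2\pi$ upgrades to an equality because ``the left side lies in $(-\pi,\pi]$ \dots being the argument of a right-half-plane point.'' The left side is $\wtTh(A)-\wtth(A^{+})$, a difference of two lifted angles; it is the \emph{right} side that is the argument of a right-half-plane point, so this justification is circular. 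A priori one only knows $\wtTh(A)\in[-(n+1)\pi/2,(n+1)\pi/2]$ and $\wtth(A^{+})\in(-n\pi/2,n\pi/2)$, so the difference is confined to an interval of length about $(2n+1)\pi$, which for $n\ge 2$ does not pin down the representative modulo $2\pi$.

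To close the gap you need an independent argument selecting the correct lift. The cleanest is a connectedness argument: off $\calS$ both sides of \eqref{wtThwtthEq} are continuous (the scalar stays in the closed right half-plane away from the origin, where $\arg$ is continuous, and $\wtTh$ is continuous on $\Sym(\RR^{n+1})\setminus\calS$), their difference is $2\pi\ZZ$-valued, and $\Sym(\RR^{n+1})\setminus\calS$ is connected because $\calS$ is a linear subspace of codimension $n+1\ge 2$; evaluating at $A=\diag(1,0,\dots,0)$, where both sides equal $\pi/2$, shows the difference is identically zero. (Alternatively one can regularize via $I_{n}^{\eta}+\i A$ with $\eta>0$ and pass to the limit, which is essentially how \cite{RS} proceeds.) Your treatment of the case $A\in\calS$ is fine once this is in place: there the formula reads $\wtTh(A)=\pi/2+\wtth(A^{+})$ by the second branch of \eqref{eq:whTh}, consistent with the convention $\arg(0)=\pi/2$ for the degenerate scalar. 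One minor correction: $I+\i A^{+}$ is invertible for \emph{every} real symmetric $A^{+}$ (its eigenvalues have real part $1$), so the Schur expansion needs no genericity hypothesis; membership in $\calS$ is instead equivalent to the vanishing of the Schur-complement scalar.
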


We also need the following elementary fact.

\blem
\lb{HLPosLemma}
Let $C\in\Sym(\RR^m)$.  Then:
\hfill\break
(i)
$\Re\big(( I+\i C)^{-1}\big)$ is positive definite.
\hfill\break
(ii) $C$ is positive (semi)-definite if and only if $\Im\big(( I+\i C)^{-1}\big)$ is 
negative (semi)-definite.
\elem

\begin{proof}
For (i) see \cite[p. 94]{HL82}.
If $O\in O(m)$ diagonalizes $C$, i.e.,
$C=O^T\diag(\la_1(C),\ldots,\la_m(C))O$,
then
\beq\label{RePartInvEq}
\baeq
\Im\big(( I+\i C)^{-1}\big)
&=
O^{-1}\diag\bigg(
\frac{-\la_1(C)}{1 +\lambda^2_1(C)},\ldots,
\frac{-\la_m(C)}{1 +\lambda^2_m(C)}\bigg)O^{-T},
\eaeq
\eeq
proving (ii).
\end{proof}

\begin{lemma}
\label{a00Lemma}
Let $A \in \calF_{c}$ with $c \in [n\pi/2,(n+1)\pi/2)$. Then $a_{00}\ge 0$.

\end{lemma}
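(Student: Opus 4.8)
The plan is to argue by contradiction, combining the monotonicity of $\wtTh$ under addition of nonnegative matrices (the subequation property $\calF_c + \calP \subset \calF_c$) with an upper bound on $\wtTh(A)$ coming from the eigenvalue picture when $a_{00} < 0$. Recall that $\wtTh(A) = \tr\arg(I_n + \i A)$, where $I_n = \diag(0,1,\dots,1)$; the $0$ in the top-left slot is exactly what makes the top-left variable $a_{00}$ behave differently from the others. The key point to extract is: since $n\pi/2 \le c < (n+1)\pi/2$, membership $A \in \calF_c$ forces $\wtTh(A)$ to be nearly maximal, essentially pinning down the signs of the relevant quantities.

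First I would set up the contradiction: suppose $a_{00} < 0$. The idea is that subtracting off the top-left entry — i.e., replacing $A$ by $A - a_{00}E_{00}$ where $E_{00}$ is the matrix with a single $1$ in the top-left corner — changes $A$ by a \emph{nonnegative} multiple of the nonnegative matrix $E_{00}$ (since $-a_{00} > 0$), hence by the subequation property moves us further into $\calF_c$, or at least does not decrease $\wtTh$. Wait — more care is needed: we want to go the other way, toward $a_{00} = 0$ and then bound $\wtTh$. The cleaner route is: since $A \mapsto \wtTh(A)$ is nondecreasing along $\calP$, and $A = (A - a_{00}E_{00}) + a_{00} E_{00}$ with $a_{00} E_{00}$ having a negative diagonal entry when $a_{00}<0$, we get $\wtTh(A) \le \wtTh(A - a_{00}E_{00})$ only if we move in the $+\calP$ direction — so instead consider the matrix $A_0 := A - a_{00}E_{00}$, which has vanishing top-left entry; then $A = A_0 + a_{00}E_{00}$ is obtained from $A_0$ by adding a \emph{nonpositive} matrix, so $\wtTh(A) \le \wtTh(A_0)$. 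This gives no contradiction yet, so the real content must come from an \emph{upper} bound that fails.

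The honest approach is via Lemma \ref{AAplusLemma}: it gives $\wtTh(A) = \wtth(A^+) + \arg(\i a_{00} + \vec a_0(I+\i A^+)^{-1}\vec a_0^T)$, and the last term lies in $[-\pi/2,\pi/2]$. Now $\wtth(A^+) = \tr\arg(I + \i A^+) < n\pi/2$ always, since each of the $n$ eigenvalues of $I + \i A^+$ has real part $1$, hence argument strictly less than $\pi/2$. Therefore $\wtTh(A) < n\pi/2 + \arg(\i a_{00} + \vec a_0(I+\i A^+)^{-1}\vec a_0^T)$. Combined with $\wtTh(A) \ge c \ge n\pi/2$, this forces $\arg(\i a_{00} + \vec a_0(I+\i A^+)^{-1}\vec a_0^T) > 0$, i.e., the imaginary part of $\i a_{00} + \vec a_0(I+\i A^+)^{-1}\vec a_0^T$ is strictly positive (or it is a positive real). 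The imaginary part of $\i a_{00}$ is $a_{00}$; the imaginary part of $\vec a_0(I+\i A^+)^{-1}\vec a_0^T$ is $\vec a_0 \, \Im\big((I+\i A^+)^{-1}\big)\vec a_0^T$. Hmm, this is not obviously signed without knowing $A^+ \ge 0$. So the argument needs the companion fact that $A^+ \ge 0$ as well — which presumably is established for $A \in \calF_c$ with $c \ge n\pi/2$ just before or alongside this lemma, or follows from $\wtth(A^+)$ being forced close to $n\pi/2$.

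Granting $A^+ \succeq 0$ (the sharpest way: $\wtth(A^+) \ge \wtTh(A) - \pi/2 \ge c - \pi/2 \ge (n-1)\pi/2$, and $\wtth(A^+) = \sum_j \tan^{-1}\lambda_j(A^+) \ge (n-1)\pi/2$ with each summand $< \pi/2$ forces all but controlling... actually it forces each $\lambda_j(A^+) > -\text{something}$; to get $\succeq 0$ one may need $c$ genuinely at least $n\pi/2$ and a limiting argument, or simply cite that positivity of $A^+$ is part of the structure of $\calF_c$ in this phase range). Then by Lemma \ref{HLPosLemma}(ii), $\Im\big((I + \i A^+)^{-1}\big) \preceq 0$, so $\vec a_0 \,\Im\big((I+\i A^+)^{-1}\big)\vec a_0^T \le 0$. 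Hence the imaginary part of $\i a_{00} + \vec a_0(I+\i A^+)^{-1}\vec a_0^T$ equals $a_{00} + (\text{something} \le 0) \le a_{00}$. For the argument of this complex number to be $\ge 0$ we need its imaginary part $\ge 0$ (if the real part is $\ge 0$) or the number to be a negative real (argument $\pi$, excluded by the $[-\pi/2,\pi/2]$ bound) — in all admissible cases the imaginary part is $\ge 0$, forcing $a_{00} \ge a_{00} + \vec a_0\,\Im(\cdots)\vec a_0^T \ge 0$, wait that's backwards; we get $0 \le a_{00} + \vec a_0\Im(\cdots)\vec a_0^T$, and since the second term is $\le 0$, we cannot immediately conclude $a_{00} \ge 0$.

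The resolution: combine with the constraint more tightly. We have simultaneously $\wtth(A^+) < n\pi/2$ strictly and $\wtTh(A) \ge n\pi/2$, so the defect $\arg(\i a_{00} + \vec a_0(I+\i A^+)^{-1}\vec a_0^T) \ge n\pi/2 - \wtth(A^+) > 0$, and moreover this defect must be \emph{at least as large as} the deficiency $n\pi/2 - \wtth(A^+) = \sum_j(\pi/2 - \tan^{-1}\lambda_j(A^+))$. If some $\lambda_j(A^+)$ were bounded away from $+\infty$ this deficiency is bounded below by a positive constant, but $\lambda_j$ can be large. The clean statement is: write $z = a_{00} + \vec a_0\,\Im\big((I+\i A^+)^{-1}\big)\vec a_0^T$ for the imaginary part and $w = \vec a_0\,\Re\big((I+\i A^+)^{-1}\big)\vec a_0^T > 0$ (positive by Lemma \ref{HLPosLemma}(i), assuming $\vec a_0 \ne 0$; trivial if $\vec a_0 = 0$) for the real part. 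Then $\arg(w + \i z) \ge n\pi/2 - \wtth(A^+)$. The hard part — and the step I expect to be the main obstacle — is showing this pins down $z \ge 0$, hence (since $\vec a_0 \Im(\cdots)\vec a_0^T \le 0$) that $a_{00} \ge -\vec a_0\Im(\cdots)\vec a_0^T \ge 0$. This works because $w > 0$ means $\arg(w+\i z) \in (-\pi/2, \pi/2)$; so the inequality $\arg(w+\i z) \ge n\pi/2 - \wtth(A^+) = \sum_j (\pi/2 - \tan^{-1}\lambda_j(A^+))$ is only possible if the right side is $< \pi/2$, i.e., at most one $\lambda_j(A^+)$ is "small", and a careful estimate of $\sum_j(\pi/2 - \tan^{-1}\lambda_j)$ versus $\arg(w + \i z)$ using the formula $w + \i z = \i a_{00} + \vec a_0(I+\i A^+)^{-1}\vec a_0^T$ and positivity of $A^+$ shows $z \ge 0$. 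I would carry this out by reducing, via simultaneous diagonalization in Lemma \ref{HLPosLemma}'s eigenbasis, to a one-variable inequality relating $\arctan$ of a weighted sum to the sum of the $\pi/2 - \arctan\lambda_j$, which is exactly the kind of inequality implicit in \cite[Lemmas 3.6--3.7]{RS}. In the write-up I would most likely invoke those lemmas directly rather than re-derive the inequality, stating: the sharp range $[-\pi/2,\pi/2]$ in Lemma \ref{AAplusLemma} together with $\wtth(A^+)< n\pi/2$ and $\wtTh(A)\ge n\pi/2$ forces $\Im\big(\i a_{00}+\vec a_0(I+\i A^+)^{-1}\vec a_0^T\big)\ge 0$, and then positivity of $A^+$ with Lemma \ref{HLPosLemma}(ii) yields $a_{00}\ge 0$.
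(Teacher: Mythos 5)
Your closing sentence is exactly the paper's argument, just run forwards rather than as a reductio: the paper assumes $a_{00}<0$, uses Lemma \ref{HLPosLemma} to get $\Im\big(\i a_{00}+\vec a_0(I+\i A^+)^{-1}\vec a_0^T\big)=a_{00}+\vec a_0\,\Im\big((I+\i A^+)^{-1}\big)\vec a_0^T<0$, and hence via \eqref{wtThwtthEq} that $\wtTh(A)-\wtth(A^+)\in[-\pi/2,0)$, contradicting $\wtTh(A)\ge c\ge n\pi/2>\wtth(A^+)$. The only structural difference is that the paper first disposes of the case $A\in\calS$, where $a_{00}=0$ by definition of $\calS$; you should do the same, since for such $A$ the right-hand side of \eqref{wtThwtthEq} degenerates.

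Two of your detours deserve comment, because you present them as the main difficulties and neither is a real one. First, the ``obstacle'' you flag (``we cannot immediately conclude $a_{00}\ge 0$'') is illusory: from $0\le a_{00}+X$ with $X:=\vec a_0\,\Im\big((I+\i A^+)^{-1}\big)\vec a_0^T\le 0$ you get $a_{00}\ge -X\ge 0$ immediately; nothing is backwards, and no refined comparison of $\arg(w+\i z)$ with $\sum_j\big(\pi/2-\tan^{-1}\lambda_j(A^+)\big)$ is needed. (With $w>0$ one has $\arg(w+\i z)>0$ if and only if $z>0$; the case $\vec a_0=0$, $a_{00}\neq0$ is the pure-imaginary case and is equally immediate.) Second, the positive semidefiniteness of $A^+$ --- which you rightly identify as an input, and which the paper's own one-line invocation of Lemma \ref{HLPosLemma} also leaves implicit --- follows from the very argument you start and then abandon: for $A\notin\calS$, Lemma \ref{AAplusLemma} gives $\wtth(A^+)\ge\wtTh(A)-\pi/2\ge(n-1)\pi/2$, and since $\wtth(A^+)=\sum_{j}\tan^{-1}\lambda_j(A^+)$ with each of the $n$ summands strictly below $\pi/2$, every eigenvalue satisfies $\tan^{-1}\lambda_{j_0}>(n-1)\pi/2-(n-1)\pi/2=0$. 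No limiting argument and no external citation are required. With these two points settled, your proof is complete and coincides with the paper's.
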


\begin{proof}
If $A\in\calS$ we are done since then $a_{00}=0$. Suppose $A\not\in\calS$.
If $a_{00}<0$, then 
Lemma~\ref{HLPosLemma} gives that
\[
\Im\left(\i a_{00} + \left\langle \vec a_0,(I + \i A^+)^{-1}\vec a_0\right\rangle\right) 
= a_{00}+ \left\langle \vec a_0, \Im(I + \i A^+)^{-1}\vec a_0\right\rangle < 0,
\]
which combined with \eqref{wtThwtthEq} implies that
\begin{equation}
\begin{aligned}
\label{wtThwtth2Eq}
\wtTh(A) - \wtth(A^+) \in [-\pi/2,0),
\end{aligned}
\end{equation}
a contradiction with the fact that $\wtTh(A)\ge c\geq n\pi/2$ since $\wtth(A^+)<n\pi/2$.
Thus, $a_{00}\ge 0$. 
\end{proof}

Combining Lemma \ref{a00Lemma} with results of Harvey--Lawson
\cite{HL} gives the following partial convexity statement. This is reminiscent of the hypothesis in Kiselman's theorem but holds in our setting without further assumption, as in the setting of convex functions.

\begin{lemma}
\label{cvxLemma}
Let $c \in [n\pi/2,(n+1)\pi/2)$ and $u \in \calF_{c}(I\times D)$.
For all $x\in D$, the function $t \to u(t,x)$ is
convex on $I$. 
\end{lemma}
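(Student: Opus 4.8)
The plan is to reduce the claimed convexity of $t\mapsto u(t,x)$ to the statement that $u$ is a subsolution, in the Harvey--Lawson viscosity sense, for the one-dimensional subequation $\{a_{00}\ge 0\}\subset\Sym(\R^{n+1})$ restricted to the $t$-direction, and then invoke the fact that $\{a_{00}\ge 0\}$-subsolutions are exactly the functions that are convex in $t$ (and that convexity in one variable is preserved under the relevant restriction operations). Concretely, I would first observe that Lemma~\ref{a00Lemma} says $\calF_c\subset\{A\in\Sym(\R^{n+1}):a_{00}\ge 0\}=:G$ for $c\in[n\pi/2,(n+1)\pi/2)$. The set $G$ is plainly closed, nonempty, proper, and satisfies $G+\calP\subset G$ (adding a nonnegative matrix only increases the $(0,0)$ entry), so $G$ is a subequation. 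Since subsolution-classes are monotone with respect to inclusion of subequations (this is part of the standard Harvey--Lawson formalism recalled in \cite{HL} and \cite[\S6]{RS}), we get $\calF_c(I\times D)\subset G(I\times D)$; in particular $u\in G(I\times D)$.

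Next I would identify $G(I\times D)$ with the functions that are convex in $t$. The dual subequation of $G$ is $\wt G=(-\Int G)^c=\{A:a_{00}\le 0\}$, and a $C^2$ function $w$ satisfies $\n^2 w\in\wt G$ iff $\partial^2 w/\partial t^2\le 0$ everywhere, i.e.\ $w$ is concave in $t$. By definition $u\in G(I\times D)$ means $u+w\in\SA(I\times D)$ for every such $w$. To extract honest convexity in $t$ from this, fix $x_0\in D$ and a subinterval $[t_0,t_1]\subset I$, and apply the subaffine/maximum-principle property \eqref{eq:mp} on the segment $\{(t,x_0):t\in[t_0,t_1]\}$ after subtracting the affine-in-$t$ function interpolating the endpoint values of $u(\cdot,x_0)$; the only subtlety is that the segment has empty interior in $I\times D$, so one must either thicken it slightly using the product structure (taking $w$ to depend only on $t$ and be strictly concave, letting the strict concavity $\to 0$) or, cleaner, first restrict $u$ to a complex-line-like slice. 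Since $w$ can be taken to depend only on $t$ and be concave, $u+w\in\SA$ restricted to the $1$-dimensional slice gives the convexity inequality for $u(\cdot,x_0)$ by a standard limiting argument; alternatively one can quote directly that for the subequation $G$ the class $G(U)$ consists of the upper semicontinuous functions convex along the $t$-fibers, which is the $n$-dimensional analogue of the elementary fact that $P(U)$-functions are convex.

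I would carry out the steps in this order: (1) verify $G$ is a subequation; (2) use Lemma~\ref{a00Lemma} and monotonicity of subsolution classes to get $u\in G(I\times D)$; (3) translate membership in $G(I\times D)$ into fiberwise convexity via the dual subequation $\wt G=\{a_{00}\le 0\}$ and the subaffine maximum principle \eqref{eq:mp}, being careful about the degenerate (lower-dimensional) slice. The main obstacle is step (3): the viscosity definition of $u\in G(I\times D)$ only tests against functions $v\in C^2(I\times D)$ with $\n^2v\in\wt G$ on the full $(n+1)$-dimensional domain, so one cannot literally restrict to a $1$-dimensional slice without argument. The fix is that $\wt G=\{a_{00}\le 0\}$ is \emph{insensitive to the $x$-Hessian block}, so for any concave $\phi\in C^2(I)$ the function $(t,x)\mapsto\phi(t)$ is an admissible test function; sliding such $\phi$'s and using that subaffine functions restricted to an affine line are subaffine (hence satisfy the $1$-d maximum principle) yields the convexity of $u(\cdot,x_0)$ on every compact subinterval, which is the assertion. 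This is precisely the step where one invokes ``results of Harvey--Lawson'' together with Lemma~\ref{a00Lemma}, exactly as the statement anticipates.
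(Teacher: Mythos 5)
Your strategy is genuinely different from the paper's: the paper approximates $u$ from above by continuous quasi-convex functions $u_k$ via \cite[Theorem 8.2]{HL}, applies Lemma \ref{a00Lemma} to the almost-everywhere-defined Hessians, and upgrades ``$\ddot u_k\ge 0$ a.e.\ plus quasi-convexity'' to convexity in $t$ using \cite[Corollary 7.5]{HL} and Fubini; you instead try to show directly that subsolutions of $G:=\{A:a_{00}\ge0\}$ are convex along $t$-lines, which is in effect the route through the Harvey--Lawson restriction theorem that the paper's remark after the lemma points to. Your steps (1) and (2) are fine: $G$ is a subequation, $\calF_c\subset G$ by Lemma \ref{a00Lemma}, and monotonicity of subsolution classes under inclusion of subequations gives $u\in G(I\times D)$. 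The problems are all in step (3).

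First, the dual is computed with the wrong sign: $\Int G=\{a_{00}>0\}$, so $-\Int G=\{a_{00}<0\}$ and $\wt G=(-\Int G)^c=\{a_{00}\ge0\}=G$. Thus $G$ is self-dual (exactly as $\P$ is in dimension one), and the admissible test functions are those \emph{convex} in $t$, not concave. Second, and more seriously, the claim that subaffine functions restrict to subaffine functions on affine lines is false: $x^2-y^2$ is harmonic, hence subaffine, but its restriction to $\{x=0\}$ is $-y^2$, which violates the one-dimensional maximum principle. Relatedly, a test function depending only on $t$ gives no control of $u+w$ on the lateral boundary $[t_0,t_1]\times\del B(x_0,\delta)$ of the thickened slice, so the limiting argument you sketch does not close. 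The repair uses the observation you state but do not exploit, namely that $\wt G$ places no constraint on the spatial Hessian block: given $t_0<t^*<t_1$ in $I$ and an affine $\ell$ with $u(t_i,x_0)\le\ell(t_i)$, test with $w(t,x)=-\ell(t)-C|x-x_0|^2$, which has $\ddot w=0$ and hence $\n^2w\in\wt G$ everywhere. Upper semicontinuity bounds $u+w$ on the top and bottom faces of $K=[t_0,t_1]\times\overline{B(x_0,\delta)}$ by $\eps'$ for $\delta$ small, the term $-C|x-x_0|^2$ with $C$ large makes $u+w$ very negative on the lateral boundary, and the maximum principle \eqref{eq:mp} applied to $u+w\in\SA(I\times D)$ on $K$ then forces $u(t^*,x_0)\le\ell(t^*)$, which is the desired convexity inequality. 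With this fix your argument is complete and rather self-contained, at the cost of reproving by hand a special case of the restriction theorem.
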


\begin{proof} Let $I' \subset I$ and $D' \subset D$ be arbitrary relatively precompact open sets. From \cite[Theorem 8.2]{HL} it follows that we can find $\{ u_k\}_k \in \mathcal F_c(I' \times D')$ quasi--convex, such that $u_k \searrow u|_{I' \times D'}$. We know that $u_k$ is twice differentiable and $\nabla^2 u_k \in \mathcal F_c$ for a.e. $(t,x) \in I' \times D'$ . Using Fubini's theorem and Lemma \ref{a00Lemma}, we obtain that,  for a.e. $x \in D'$, the function $t \to u_k(t,x)$ is twice differentiable and $\nabla^2_t u_k(t,x) \geq 0$  for a.e. $t \in I'$.

Since $t \to u_k(t,x)$ is additionally quasi--convex, it follows that $t \to u_k(t,x)$ has to be convex on $I'$ for a.e. $x \in D'$ \cite[Corollary 7.5]{HL}. As each $u_k$ is continuous on $I' \times D'$, it follows that in fact $t \to u_k(t,x)$ has to be convex for all $x \in D'$. Letting $k \to \infty$ we obtain that $t \to u(t,x)$ is also convex for all $x \in D'$, finishing the proof.
\end{proof}

\begin{remark} {\rm
\label{}
Alternatively, Lemma \ref{cvxLemma} also follows from 
a more recent general restriction theorem of Harvey--Lawson~\cite{HL3}.

} \end{remark}

Let $t\in\RR, \, x\in\RR^n$.
For a function $f(t,x)$ of $n+1$ variables denote
\begin{equation}
\begin{aligned}
\label{}
 \nabla^2 f=
\begin{pmatrix}
 \ddot f& \nabla^2_{tx}f\cr
 (\nabla^2_{tx}f)^T& \nabla^2_x f\cr
\end{pmatrix}.
\end{aligned}
\end{equation}

The next lemma is modeled on Kiselman's proof of the classical minimum principle  \cite[Theorem 1.3.1]{Kiselman-notes}. 

\begin{lemma}
\label{MinLemma}
Suppose that $f\in C^2(I\times D)$, and that for each $x\in D$,
$f(\,\cdot\,,x):I\ra \RR$ is strongly convex 
and achieves its unique infimum at the point $t(x)$ in the interior of $I$. 
Denote by
$$
g(x)=\inf_{t\in D}f(t,x)=f(t(x),x), \q x\in D,
$$
Then,
\begin{equation}
\begin{aligned}
\label{HessgEq}
 \nabla^2 g(x)= \bigg(\nabla^2_x f-\frac{1}{\ddot f}(\nabla^2_{tx}f)^T \nabla^2_{tx}f\bigg)(t(x),x).
\end{aligned}
\end{equation}
\end{lemma}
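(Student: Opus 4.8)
The plan is to compute $\nabla^2 g$ directly by differentiating the identity $g(x) = f(t(x),x)$ twice, using the implicit function theorem to control $t(x)$. First I would note that since $f(\cdot,x)$ is strongly convex and $C^2$ with interior minimum at $t(x)$, the first-order condition $\partial_t f(t(x),x) = 0$ holds, and $\ddot f(t(x),x) > 0$; the implicit function theorem then guarantees that $x \mapsto t(x)$ is $C^1$ (indeed as smooth as needed), so $g \in C^2(D)$ and the computation below is legitimate.

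Next I would differentiate $g(x) = f(t(x),x)$ once in $x$. By the chain rule,
\begin{equation*}
\nabla_x g(x) = \ddot{}\,\dot{\!f}?
\end{equation*}
— more precisely, $\nabla_x g = \partial_t f \cdot \nabla_x t + \nabla_x f$ evaluated at $(t(x),x)$, and the first term vanishes because $\partial_t f(t(x),x) \equiv 0$. Hence $\nabla_x g(x) = (\nabla_x f)(t(x),x)$. This is the key simplification that makes the envelope-type formula work. Then I would differentiate once more: $\nabla^2 g(x) = (\nabla^2_{tx} f)(t(x),x)\,\nabla_x t(x)^T + (\nabla^2_x f)(t(x),x)$, where I have again used the chain rule on $x \mapsto (\nabla_x f)(t(x),x)$.

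To finish I need a formula for $\nabla_x t(x)$. Differentiating the first-order condition $\partial_t f(t(x),x) \equiv 0$ in $x$ gives $\ddot f(t(x),x)\,\nabla_x t(x) + (\nabla^2_{tx} f)(t(x),x) = 0$, so $\nabla_x t(x) = -\frac{1}{\ddot f}(\nabla^2_{tx} f)$ at $(t(x),x)$. Substituting this into the expression for $\nabla^2 g$ yields
\begin{equation*}
\nabla^2 g(x) = \Big(\nabla^2_x f - \frac{1}{\ddot f}(\nabla^2_{tx} f)^T \nabla^2_{tx} f\Big)(t(x),x),
\end{equation*}
which is exactly \eqref{HessgEq}. (Here one uses that $\nabla^2_{tx} f$ is a row vector, so its transpose times itself is the rank-one symmetric $n\times n$ matrix appearing in the formula; the strong convexity ensures $\ddot f$ never vanishes so the division is everywhere defined.)

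The only genuine subtlety — and the main thing to be careful about rather than a real obstacle — is justifying that $t(x)$ is a $C^1$ (or $C^2$ enough) function of $x$ so that $g$ is $C^2$ and the double differentiation is valid; this is handled cleanly by the implicit function theorem applied to $\partial_t f = 0$ at points where $\ddot f > 0$, which strong convexity supplies. Everything else is a routine two-step chain-rule computation, so I would keep the exposition short and simply present the differentiations in order.
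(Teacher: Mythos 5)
Your proposal is correct and follows essentially the same route as the paper: the implicit function theorem applied to $\dot f(t(x),x)=0$ (using $\ddot f>0$ from strong convexity) gives $t(\cdot)\in C^1$, the first-order condition kills the $\partial_t f\cdot\nabla_x t$ term so that $\nabla_x g=\nabla_x f(t(x),x)$, and a second differentiation combined with $\nabla_x t=-(\ddot f)^{-1}\nabla^2_{tx}f$ yields \eqref{HessgEq}. No gaps.
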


\begin{proof}First we claim that $g\in C^2$.
To see this, let $t=t(x)$ be the unique
solution of 
\begin{equation}
\begin{aligned}
\label{vEq}
g(x)=f(t(x),x).
\end{aligned}
\end{equation}
Since $f\in C^1$, $t(x)$ is the unique solution of
\begin{equation}
\begin{aligned}
\label{nablatvEq}
\dot f(t(x),x)=0.
\end{aligned}
\end{equation}
By the implicit function theorem, $t(x)$ is a $C^1$ function of $x$ provided 
$\ddot f>0$,
which holds by assumption.
Thus, $g\in C^1$ by \eqref{vEq}. Differentiating \eqref{vEq} and evaluating at $(t(x),x))$ then
gives 
\begin{equation}
\begin{aligned}
\label{nablaxvEq}
\nabla_x g(x)=\nabla_x f(t(x),x),
\end{aligned}
\end{equation}
using   \eqref{nablatvEq}. Since the right-hand side is differentiable it follows that $g\in C^2$, as claimed; moreover,
\begin{equation}\nonumber
\begin{aligned}
\label{hessxvEq}
\nabla^2_x g(x)=\nabla^2_x f(t(x),x)+\nabla^2_{tx}f(t(x),x)^T\nabla_x t(x).
\end{aligned}
\end{equation}
Now, using   \eqref{nablatvEq},
\begin{equation}\nonumber
\begin{aligned}
\label{}
\ddot f(t(x),x)\nabla_x t(x)+\nabla^2_{tx}f(t(x),x)=0,
\end{aligned}
\end{equation}
i.e.,
\begin{equation}\nonumber
\begin{aligned}
\label{}
\nabla_x t(x)=-\big(\ddot f(t(x),x)\big)^{-1}\nabla_{tx}^2f(t(x),x).
\end{aligned}
\end{equation}
Thus,
\begin{equation}\nonumber
\begin{aligned}
\label{hessxv2Eq}
\nabla^2_x g(x)
=
\Big(
\nabla^2_x f-\frac{1}{\ddot f}(\nabla_{tx}^2 f)^T
\nabla^2_{tx}f\Big)(t(x),x),
\end{aligned}
\end{equation}
as claimed.
\end{proof}

\begin{corollary}
\label{detHessCor}Under the assumptions of Lemma \ref{MinLemma},
\begin{equation}
\begin{aligned}
\label{hessxv2Eq}
\det\big(I+\i\nabla^2_x g(x)\big)
=
\frac{\det\big(I_{n}+\i\nabla^2 f(t(x),x)\big)}
{\i\ddot f(t(x),x)} 
.
\end{aligned}
\end{equation}
In particular $
\wtth_g(x)=\wtTh_f(t(x),x)-\pi/2$.
\end{corollary}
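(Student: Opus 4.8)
The plan is to read off the determinant identity directly from Lemma~\ref{MinLemma} via the block (Schur complement) formula for determinants, and then to pass to arguments, the only delicate point being the choice of branch.

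Write $A:=\nabla^2f(t(x),x)\in\Sym(\RR^{n+1})$, with blocks $a_{00}=\ddot f(t(x),x)$ (which is $>0$ by strong convexity), $\vec a_0=\nabla^2_{tx}f(t(x),x)$, and $A^+=\nabla^2_xf(t(x),x)$. Lemma~\ref{MinLemma} says precisely that $\nabla^2g(x)=A^+-a_{00}^{-1}\vec a_0^T\vec a_0$, that is, $\nabla^2g(x)$ is the Schur complement of the $(0,0)$-block in
$$
I_n+\i A=\begin{pmatrix}\i a_{00}&\i\vec a_0\\ \i\vec a_0^T&I+\i A^+\end{pmatrix}.
$$
Since $\i a_{00}\neq0$, the formula $\det\begin{pmatrix}P&Q\\ R&S\end{pmatrix}=\det(P)\det(S-RP^{-1}Q)$ yields $\det(I_n+\i A)=\i a_{00}\,\det\big(I+\i(A^+-a_{00}^{-1}\vec a_0^T\vec a_0)\big)=\i\ddot f(t(x),x)\,\det(I+\i\nabla^2g(x))$, which is the asserted identity.

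For the ``in particular'' clause, taking arguments in the identity just obtained and using $\arg(\i\ddot f(t(x),x))=\pi/2$ gives $\wtth_g(x)=\wtTh_f(t(x),x)-\pi/2$ modulo $2\pi$; I would promote this to an equality of real numbers by a continuity argument. For $s\in[0,1]$ set $A_s:=\begin{pmatrix}a_{00}&s\vec a_0\\ s\vec a_0^T&A^+\end{pmatrix}$ and $B_s:=A^+-s^2a_{00}^{-1}\vec a_0^T\vec a_0$. Each $A_s$ has $(0,0)$-entry $a_{00}>0$, hence $A_s\in\Sym(\RR^{n+1})\sm\calS$, a set on which $\wtTh$ is smooth, so $s\mapsto\wtTh(A_s)$ is continuous; and since $I+\i B_s$ has all of its eigenvalues with real part $1$, $s\mapsto\wtth(B_s)$ is continuous as well. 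Repeating the Schur computation gives $\det(I_n+\i A_s)=\i a_{00}\det(I+\i B_s)$, hence $\wtTh(A_s)\equiv\pi/2+\wtth(B_s)\pmod{2\pi}$ for every $s$, so that $h(s):=\wtTh(A_s)-\wtth(B_s)-\pi/2$ is a continuous $2\pi\ZZ$-valued function. At $s=0$ the matrix $I_n+\i A_0$ is block-diagonal, whence $\wtTh(A_0)=\pi/2+\wtth(A^+)=\pi/2+\wtth(B_0)$ and $h(0)=0$; therefore $h\equiv0$, and evaluation at $s=1$, where $A_1=\nabla^2f(t(x),x)$ and $B_1=\nabla^2g(x)$ by Lemma~\ref{MinLemma}, yields $\wtth_g(x)=\wtTh_f(t(x),x)-\pi/2$.

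The only genuine obstacle is this branch bookkeeping: the determinant identity pins down the angles only modulo $2\pi$, and the role of the path $A_s$ is that it never meets the singular set $\calS$ (because $a_{00}=\ddot f>0$ all along it), so the integer ambiguity is locally constant, hence constant, and can be read off at $s=0$. Everything else reduces to the Schur complement computation already carried out in the proof of Lemma~\ref{MinLemma}.
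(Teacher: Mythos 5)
Your determinant identity is obtained exactly as in the paper: both arguments apply the Schur-complement formula $\det C = c_{00}\det\big(C^{+}-\vec c_0^{\,T}\vec c_0/c_{00}\big)$ to $C=I_n+\i\nabla^2 f$ and invoke \eqref{HessgEq}; that part is identical and correct. Where you genuinely diverge is in pinning down the branch in the ``in particular'' clause. The paper fixes the integer $p$ in $\wtTh_f-\wtth_g=\pi/2-2\pi p$ by appealing to the a priori range $[-\pi/2,\pi/2]$ coming from Lemma \ref{AAplusLemma} (which rests on the \emph{other} Schur complement, through the $C^{+}$ block). You instead deform the off-diagonal block to zero along $A_s$, observe that the path stays in $\Sym(\RR^{n+1})\setminus\calS$ because $a_{00}=\ddot f>0$, so that $s\mapsto\wtTh(A_s)-\wtth(B_s)-\pi/2$ is a continuous $2\pi\ZZ$-valued function, and evaluate at the block-diagonal endpoint $s=0$ where $\wtTh(A_0)=\pi/2+\wtth(A^{+})$ by \eqref{eq:whTh}. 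This is a valid and self-contained alternative: it only uses the smoothness of $\wtTh$ off $\calS$ and the continuity of $\wtth$, and it sidesteps the slightly delicate point that Lemma \ref{AAplusLemma} as stated compares $\wtTh(A)$ with $\wtth(A^{+})$ (the lower-right block) rather than with $\wtth$ of the Schur complement $\nabla^2 g$, which is what actually appears in the corollary. The paper's route is shorter once Lemma \ref{AAplusLemma} is granted; yours trades that input for an elementary connectedness argument. Both are correct.
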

\begin{proof}
Note that for $C=[c_{ij}]_{i,j=0}^n\in \Sym(\C^{n+1})$ with
$c_{00}\not=0$,
\beq
\label{DetFormulaEq}
\det C
= 
c_{00}
\det
\big(
C^+ - (\vec c_0)^T\vec c_0/{c_{00}}\big).
\eeq
(Note that this identity is different from the one used to prove Lemma \ref{AAplusLemma}!)
Applying this to $C=I_{n}+\i\nabla^2f$, combined with Lemma \ref{MinLemma} above  gives \eqref{hessxv2Eq}.

We turn to the last statement.   
As $\ddot f>0$, equation \eqref{hessxv2Eq} gives that
$$
\pi/2 =\Th_f(t(x),x)-\th_g(x).
$$
So for some $p\in\ZZ$, $\pi/2-2\pi p =\wtTh_f(t(x),x)-\wtth_g(x)$. Lemma \ref{AAplusLemma} implies that the right-hand side is in $[-\pi/2,\pi/2]$. 
Thus, $p=0$, as desired.
\end{proof}

\begin{proof}[Proof of Theorem \ref{mainThm}]
Suppose $I_k \subset I$ and $D_k \subset D$, $k \in \Bbb N$ are exhaustions 
of $I$ and $D$, respectively, by precompact open subsets.

First, observe that it is enough to prove that $v_k \in F_{c-\frac{\pi}{2}}(D_k)$, where 
$$
v_k(x):= \inf_{t \in I_k}u(t,x),\q x \in D_k.
$$
Indeed, the sequence $\{ v_k\}_k$ is decreasing, hence the limit $v:= \lim_k v_k$ 
satisfies $v \in F_{c-\frac{\pi}{2}}(D)$ \cite[(5), p. 410]{HL}.

Let us fix $k$. By Lemma \ref{regularizeLemma} below, there exists a decreasing sequence $u_k^l \in \mathcal F_c\cap C^\infty(I_k \times D_k) $ such that 
$u_k^l \searrow u|_{I_k \times D_k}$. Pick $f: I_k \to \Bbb R$, a strongly convex smooth exhaustion function of $I_k$ (e.g., if $I_k=(a,b)$, take $f=-\log(t-a)-\log(b-t)$). After adding $\frac{1}{l}f$ to $u_k^l$, we can further assume that for any $x \in D_k$, the function $u_k^l(\cdot,x)$ is a strongly convex exhaustion of $I_k$. 

Observe that $u^l_k \in C^2(I_k \times D_k)$ satisfies the requirements of 
Lemma \ref{MinLemma}, hence by the last statement in Corollary  \ref{detHessCor}, we obtain that  
$$\tilde \theta_{v^l_k}(x)=\wtTh_{u^l_k}(t^l_k(x),x)-\pi/2 \geq c - \pi/2, 
\q x \in D_k,
$$ where 
$$
v_k^l(x):= \inf_{t \in I_k}u_k^l(t,x), \q x \in D_k.
$$
Hence, by definition,  $v^l_k \in F_{c-\frac{\pi}{2}}(D_k).$ As the sequence $\{ v_k^l\}_l$ is decreasing, we ultimately get $v_k:=\lim_l v_k^l \in F_{c-\frac{\pi}{2}}(D_k)$, finishing the proof.
\end{proof}

\begin{lemma}
\label{regularizeLemma}
For any  $c \in [n\pi/2,(n+1)\pi/2)$ the set $\mathcal F_c \subset \Sym(\Bbb R^{n+1})$ is convex. If $u \in \calF_{c}(I\times D)$ and $I'\subset I, \ D'\subset D$ are  precompact open sets then there exists a sequence $\{u_k\}_k\subset 
C^\infty\cap \calF_{c}(I'\times D')$
strictly decreasing to $u|_{I' \times D'}$.
\end{lemma}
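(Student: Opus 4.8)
The plan is to prove the two assertions of Lemma~\ref{regularizeLemma} in order: first convexity of $\calF_c$ for $c\in[n\pi/2,(n+1)\pi/2)$, then the regularization statement, which will follow from convexity together with the standard Harvey--Lawson mollification machinery.

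\textbf{Convexity of $\calF_c$.} The set $\calF_c=\{A:\wtTh(A)\ge c\}$, so it suffices to show that $\wtTh$ is concave on the region where $\wtTh\ge n\pi/2$. I would argue as follows. By Lemma~\ref{AAplusLemma}, $\wtTh(A)=\wtth(A^+)+\arg\!\big(\i a_{00}+\vec a_0(I+\i A^+)^{-1}\vec a_0^T\big)$, with the last term in $[-\pi/2,\pi/2]$. When $\wtTh(A)\ge n\pi/2$, Lemma~\ref{a00Lemma} gives $a_{00}\ge0$, and moreover $\wtth(A^+)$ must be close to its maximum $n\pi/2$: indeed $\wtth(A^+)\ge \wtTh(A)-\pi/2\ge (n-1)\pi/2$. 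The branch of $\arg$ we use is smooth and concave on the relevant half-plane (the argument of $\i a_{00}+(\text{positive definite form})$ — by Lemma~\ref{HLPosLemma}(i) the real part $\vec a_0\,\Re(I+\i A^+)^{-1}\vec a_0^T$ is $\ge 0$, so the complex number lies in the closed right half-plane union upper half-plane, where $\arg\in[0,\pi]\cup\ldots$; more carefully one checks $\arg$ is concave along segments staying in an appropriate sector). Rather than do this by hand, the cleaner route is to invoke the characterization from Theorem~\ref{USCThm} together with \cite[Lemma~5.3]{RS}: $\calF_c+\calP\subset\calF_c$ is already known, so I only need convexity as a set, and this is exactly the statement that the ``branch'' $\{\wtth\ge (n-1)\pi/2 \text{-ish}\}$ of the special Lagrangian operator is convex — a fact proved by Harvey--Lawson \cite{HL} (the branches $F_c$ with $c\ge (n-1)\pi/2$ are convex, and the space-time version inherits this from the fibered structure in Lemma~\ref{AAplusLemma}). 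So the first sentence reduces to citing the convexity of the outer branches of the (space-time) special Lagrangian subequation from \cite{HL,RS}, restricted to the range $c\ge n\pi/2$.

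\textbf{Regularization.} Given convexity of $\calF_c$, the second statement is the standard fact that $F$-subharmonic functions for a \emph{convex} subequation can be mollified: if $u\in\calF_c(I\times D)$ and $\rho_\epsilon$ is a standard mollifier, then $u*\rho_\epsilon$ is smooth, defined on a slightly shrunken domain, and still lies in $\calF_c$ because convexity of the fiber $\calF_c$ makes the subequation condition stable under averaging (Jensen's inequality at the level of $\nabla^2$ in the viscosity sense). This is \cite[Theorem~8.2]{HL} or the discussion in \cite[\S6]{RS}; since $u$ is locally bounded above and $\calF_c$-subharmonic functions are locally integrable (or $-\infty$), $u*\rho_\epsilon\searrow u$ as $\epsilon\to0$ on $I'\times D'$ once $\epsilon$ is small relative to $\dist(I'\times D',\partial(I\times D))$. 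Taking $\epsilon=1/k$ and, if strict monotonicity is desired, replacing $u_k$ by $u*\rho_{1/k}+1/k$ (which remains in $\calF_c$ since adding a constant does not change $\nabla^2$) yields a strictly decreasing smooth sequence in $C^\infty\cap\calF_c(I'\times D')$ converging to $u|_{I'\times D'}$.

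\textbf{Main obstacle.} The only genuine content is the convexity of $\calF_c$; everything else is bookkeeping with known mollification results. The subtlety is that $\wtTh$ is only upper semicontinuous and fails to be smooth across $\calS$, so one cannot naively differentiate twice. I expect the cleanest argument is: (a) on $\Sym(\R^{n+1})\sm\calS$, use Lemma~\ref{AAplusLemma} plus concavity of the relevant branch of $\arg$ and of $\wtth|_{\{\ge(n-1)\pi/2\}}$ (the latter being Harvey--Lawson's convexity of outer branches) to get that $\{\wtTh\ge c\}\sm\calS$ is convex; (b) since $\wtTh$ is the smallest u.s.c.\ extension (Theorem~\ref{USCThm}) and $\calS$ has empty interior, the closure of $\{\wtTh\ge c\}\sm\calS$ is $\calF_c$, and the closure of a convex set is convex. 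Step (a) is where care is needed to ensure the segment joining two points of $\calF_c\sm\calS$ does not exit the sector on which $\arg$ is concave — this is guaranteed precisely by the hypothesis $c\ge n\pi/2$, which via Lemma~\ref{a00Lemma} forces $a_{00}\ge0$ along the whole segment and keeps $\wtth(A^+)$ in the convex branch.
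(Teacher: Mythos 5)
Your skeleton is right — convexity of the fiber $\calF_c$ is the crux, and once it is in hand the smoothing follows from standard machinery — but the proposed proof of convexity has a genuine gap. The direct route in your step (a) does not work: $\arg$ is the imaginary part of $\log$, hence harmonic, with a traceless indefinite Hessian; it is neither convex nor concave on any open sector, so "concavity of the relevant branch of $\arg$" is false. Moreover, the map $A\mapsto \i a_{00}+\vec a_0(I+\i A^+)^{-1}\vec a_0^T$ from Lemma \ref{AAplusLemma} is not affine in $A$, so even a concave outer function would not yield concavity of the composition along segments; and $\wtth$ itself is not concave on $F_{(n-1)\pi/2}$ (only the superlevel \emph{sets} are convex). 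Your fallback — that "the space-time version inherits this from the fibered structure" — is exactly the assertion that needs proof, and the fibered decomposition of Lemma \ref{AAplusLemma} does not supply a mechanism for it. The paper's mechanism is different and essential: it uses the \emph{linear} rescaling $E\mapsto E_p:=I_n^pEI_n^p$ together with the limit formula $\wtTh(D)=\lim_{p\to\infty}\wtth(D_p)$ on $\Sym(\R^{n+1})\sm\calS$, which converts the degenerate angle into a limit of non-degenerate angles of linearly transformed matrices; convexity of the non-degenerate branch $F_{c'}\subset\Sym(\R^{n+1})$ (valid since $c'\ge n\pi/2$) then passes to $\calF_c$. To make the limit argument close up, the paper also perturbs $A,B$ by elements of $\Int\calP$ so that $A_k,B_k\in\calF_{c+\varepsilon_k}\sm\calS$ with midpoint off $\calS$; this quantitative margin $\varepsilon_k$ is what survives the $p\to\infty$ limit. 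Your step (b) (perturb off $\calS$ and take closures) is close in spirit to this perturbation step, but without the rescaling trick the convexity claim is unproved.

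On the regularization half: mollifying $u$ directly by $u*\rho_\epsilon$ presupposes that $u$ is locally integrable, which you assert but do not justify; general $F$-subharmonic functions are merely upper semicontinuous, and the standard Harvey--Lawson approximation is by sup-convolution (quasi-convex approximants), not convolution. The paper accordingly proceeds in two stages: first \cite[Theorem 8.2]{HL} produces continuous, quasi-convex $u_k'\searrow u$ (no convexity of $\calF_c$ needed), and only then are these \emph{continuous} functions smoothed locally uniformly via \cite[Lemma 10.7]{RS}, which is where convexity of $\calF_c$ enters; a diagonal choice $u'_{k,N(k)}$ interleaved between $u'_k$ and $u'_{k-1}$ gives the strictly decreasing smooth sequence. (Your fix $u*\rho_{1/k}+1/k$ for strict monotonicity also needs $\epsilon\mapsto u*\rho_\epsilon$ to be monotone, which is not automatic for a general convex subequation.) These are repairable bookkeeping issues, but the convexity argument is the part that genuinely needs to be replaced.
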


\begin{remark} Observe that we can not ask for uniform convergence of $u_k$ to $u$, as in \cite[Lemma 10.7]{RS}, because $u$ may not be continuous.
\end{remark}

\begin{proof} 
The first part of the proof is devoted to showing that $\mathcal F_{c}$ is convex.
Let $A,B \in \mathcal F_{c}$. We claim that there exists $A_k,B_k \in \mathcal F_{c} \setminus \mathcal S$ such that $A_k \to A, B_k \to B$ and 
$$
C_k := (A_k + B_k)/2 \not\in \mathcal S.
$$
This follows from the fact that $\mathcal F_{c}+\textup{int }\mathcal P \subset \textup{int }\mathcal F_{c}$, hence one has a great degree of freedom in perturbing $A,B$. In fact, if we perturb using elements of $\textup{int }\mathcal P$, we get additionally that 
$$
A_k,B_k \in \mathcal F_{c+ \varepsilon_k}
$$ for some $\varepsilon_k >0$ (see the first formula in the proof of \cite[Lemma 5.5]{RS}).

Recall from \eqref{IetaEq} that $I_{n}^p := \diag(p,1,1,...1)$. For $E \in \Sym(\Bbb R^{n+1})$, set 
$$
E_p := I_n^p E I_n^p.
$$
As in the proof of \cite[Lemma A.3]{RS}, 
\begin{flalign}
\wtTh(D) =\textup{tr arg}(I_{n} + \i D) & =\lim_{p \to \infty} \textup{tr arg}(I_n^{1/p^2} + \i D ) = \lim_{p \to \infty} \textup{tr arg}(I + \i I^p_{n}DI^p_{n}) \nonumber\\
&=\lim_{p \to \infty} \textup{tr tan}^{-1}(D_p)  \nonumber \\
&= \lim_{p \to \infty} \tilde \theta (D_p), \ \ \ \ D \in \textup{Sym}^2(\Bbb R^{n+1}) \setminus \mathcal S.\label{eq: trarg_limitformula}
\end{flalign}
It follows that there exists big enough $p$ such that ${A_k}_p,{B_k}_p \in F_{c+\varepsilon_k/2}$. As $F_{c+\varepsilon_k/2}$ is convex, this implies that ${C_k}_p \in F_{c+\varepsilon_k/2}$, i.e., $\tilde \theta({C_k}_p) \geq c +\varepsilon_k/2$ for large enough $p$. Using \eqref{eq: trarg_limitformula} again, it follows that ${C_k} \in \mathcal F_{c+\varepsilon_k/2}$, hence $C_k \in \mathcal F_{c}$. As $\mathcal F_c$ is closed, it follows $C \in \mathcal F_c$, implying that $\mathcal F_{c}$ is convex.

We argue now that $\mathcal F_c(I'' \times D'')$ is convex for any $I'' \subset I$ and $D'' \subset D$. Let 
$$
v,w \in \mathcal F_c(I'' \times D'').
$$
By \cite[Theorem 8.2]{HL} there exists $\{v_k\}_k,\{w_k\}_k \in C^0 \cap \mathcal F_c(I''_k \times D''_k)$ quasi-convex, such that $v_k \searrow v,w_k \searrow w$, and $I''_k\times D''_k$ exhausts $I''\times D''$. By quasi-convexity there exists $S \subset I'' \times D''$ of measure zero such that 
$$
\nabla^2 v_k(x),\nabla^2 w_k(x) \in \mathcal F_c, \q x \in I''_k \times D''_k \setminus S.
$$
Convexity of $\mathcal F_c$ now gives that $\nabla^2 (v_k(x)+ w_k(x))/2 \in \mathcal F_c, x \in I''_k \times D''_k \setminus S$. As $(v_k(x)+ w_k(x))/2$ is also quasi-convex,  then  $(v_k + w_k)/2 \in \mathcal F_c(I''_k \times D''_k)$
\cite[Corollary 7.5]{HL}. The fact that $(v_k + w_k)/2 \searrow (v + w)/2$ and that $I''_k\times D''_k$ is exhaustive implies that $(v+w)/2 \in \mathcal F_c(I'' \times D'')$.

We turn to the last statement of the Lemma. Let $\tilde I \subset I,\tilde D\subset D$ be open neighborhoods of $\overline {I'}, \overline {D'}$. It follows from \cite[Theorem 8.2]{HL} that there exists $\{u'_k\}_k \in C^0 \cap \mathcal F_c(\tilde I \times \tilde D)$ strictly decreasing to $u|_{\tilde I \times \tilde D}$. Actually, the potentials $u'_k$ are quasi-convex 
hence continuous. Approximate each $u'_k$ locally uniformly with 
$$
\{u'_{k,l}\}_{l\in\NN}
\subset  C^\infty \cap \mathcal F_c(I' \times D')
$$ 
using \cite[Lemma 10.7]{RS}
(this last result is applicable since, as proven above, $\mathcal F_c(I' \times D')$ is convex).
Since $u'_k<u'_{k-1}$, there exists $N(k)\in\NN$ such that 
$u'_k<u'_{k,N(k)}<u'_{k-1}$. 
Consequently, the sequence $\{u'_{k,N(k)}\}_{k\in\NN}\subset  C^\infty \cap \mathcal F_c(I' \times D')$ is  decreasing to $u|_{I' \times D'}$, as desired. 
\end{proof}

\section{A formula for solutions of the DSL}
\label{sec: applications}

Given a function $f=f(t,x)$ on $I\times D$ (that we consider
as a family of functions on $I$ parametrized by $D$), we let
\beq\label{LegTrans1Eq}
f^\star(\tau,x)=f^\star(\tau):=\inf_{t\in I}[f(t,x)-\tau].
\eeq
This is the {\it negative} of the usual partial Legendre transform solely in the $t$-variable. Despite this, we also refer to it sometimes as the partial Legendre transform, and we often omit the dependence
of the function on the $D$ variables in the notation. Conversely,
if $g=g(\tau,x)$ is a
function on $\RR\times D$ taking values in $[-\infty,\infty)$,
where $\RR$ is considered as the dual vector space to the copy of $\RR$ containing $I$, 
then let
\beq\label{LegTrans2Eq}
g^\star(t,z)=g^\star(t):=\sup_{\tau\in \RR}[g(\tau,x)+\tau t].
\eeq
Note that $f^{\star\star}=f$ if and only if $f$ is convex in $t$, lower semicontinuous and nowhere
equal to $-\infty$ (we do not allow the constant function $-\infty$ in this section) \cite[Theorem 12.2]{Rockafellar}.

Let $I=[0,1]$. Given a function $g$ on $\del(I\times D)$, we say a function $u\in C^2(I\times D)$ solves the
Dirichlet problem for the  degenerate special Lagrangian (DSL) equation of phase $\th\in(-\pi,\pi]$ if
\beq
\label{DSLMainEq}
\baeq
\Im \left( e^{-\i \th} \det (I_n + \i\nabla^2 u)\right) 
& = 0, 
\h{\ \ on \ \ } I\times D,
\cr 
\Re \left(e^{-\i \th} \det \left(I + \i\nabla_x^2 u\right)\right) 
& > 0, 
\h{\ \ on \ \ } I\times D,
\cr
u
&=g \h{\ \ on \ \ }\del(I\times D).
\eaeq
\eeq
Recall from \S\ref{DLSSubSec} that a function $u$ is said to be a weak solution of the Dirichlet problem for the  DSL if 
$u|_{\del(I\times D)}=g$ and $u\in \calF_c(I\times D)\cap (-\calF_c(I\times D))$, where $c \in (-(n+1)/2\pi,(n+1)/2\pi)$ and $c \equiv \theta \mod 2\pi$. 
Weak solutions to the Dirichlet problem for the  DSL exist by the following result 
\cite[Theorem 1.2]{RS}.
\bthm
\label{MainExistenceThm}
Let $D\subset \RR^n$ be a bounded strictly convex domain,
and let $g\in C^2\left(\del(I\times D)\right)$ be a consistent function
such that 
\begin{equation}\label{eq:plbc}
g|_{\{i\}\times D}
\in F_{c-\pi/2}(D),
\end{equation}
for $i\in\{0,1\}$, 
with $c\in [n\pi/2,(n+1)\pi/2)$.
There exists a unique solution $u 
\in C^0(\overline{I\times D})\cap C^{0,1}(I\times D)$ for the $\calF_c$-Dirichlet problem with boundary values $g$.  
\ethm

Given $v: D\ra\RR$ and $f:\del D\ra\RR$, define the 
$(F_a,v,f)$-envelope
$$
P(v;f)=\sup\{w\in F_a(D)\,:\, w\le v \h{\ on \ }  D,\; w|_{\del D}\le f\},$$
where $w|_{\del D}\le f$ means that $\limsup_{\xi \to x}w(\xi) \leq f(x)$ for all $x \in \del D$.
\begin{lemma} 
\label{uscLemma}
If $a \in [(n-1)\pi/2,n\pi/2)$ then $P(v,f)=\usc P(v;f)\in F_a(D)$
and moreover $P(v;f)\in C^0(D)$. Also if $v,f$ are continuous, then $P(v,f) \leq v$ and $P(v,f)|_{\del D} \leq f$, i.e., $P(v,f)$ is a ``candidate for itself''.
\end{lemma}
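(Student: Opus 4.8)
The plan is to prove the three assertions in order, the last two being almost formal once the first is in hand. Write $\mathcal{C}:=\{w\in F_a(D):\ w\le v\text{ on }D,\ w|_{\del D}\le f\}$, so that $P(v;f)=\sup_{w\in\mathcal{C}}w$. First I would note that $\mathcal{C}\neq\emptyset$: since $n\tan^{-1}(2\lambda)\nearrow n\pi/2>a$ as $\lambda\to\infty$, the function $\psi(x):=\lambda|x|^2+b$ has $\nabla^2\psi\equiv 2\lambda I\in F_a$ for $\lambda$ large, hence $\psi\in F_a(D)$; with such a $\lambda$ fixed and $b$ chosen negative enough (using that $f$ is bounded on $\del D$ and $v$ is bounded below in the situations where the lemma is applied) one gets $\psi\le v$ on $D$ and $\psi\le f$ on $\del D$, so $\psi\in\mathcal{C}$ and in particular $P(v;f)\not\equiv-\infty$. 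Moreover $\mathcal{C}$ is locally uniformly bounded above, being dominated by $v$, which is locally bounded above. By the Harvey--Lawson theorem that the usc regularization of a locally bounded-above family of type-$F$ functions is again of type $F$ \cite{HL} (see also \cite[\S6]{RS}), it follows that $\usc P(v;f)\in F_a(D)$.

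Next I would show that $\usc P(v;f)$ is itself an element of $\mathcal{C}$ --- ``a candidate for itself'' --- which, since $P(v;f)\le\usc P(v;f)$ always holds, at once gives $P(v;f)=\usc P(v;f)\in F_a(D)$. As $v$ is upper semicontinuous and $P(v;f)\le v$, also $\usc P(v;f)\le v$ on $D$. For the boundary condition, $\limsup_{\xi\to x}\usc P(v;f)(\xi)=\limsup_{\xi\to x}P(v;f)(\xi)$ for each $x\in\del D$, and to bound this by $f(x)$ one uses an upper barrier: a supersolution $\beta\in C^0(\overline{D})$, i.e.\ $-\beta\in\widetilde F_a(D)=F_{-a}(D)$, with $\beta|_{\del D}=f$, whose existence --- together with the comparison principle of \cite{HL} --- is part of the Dirichlet-problem package behind Theorem \ref{MainExistenceThm}. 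Indeed, then every $w\in\mathcal{C}$ has $\limsup_{\xi\to x}(w-\beta)(\xi)\le 0$ on $\del D$, and since $w-\beta\in\SA(D)$ this forces $w\le\beta$ on $D$, hence $P(v;f)\le\beta$ and $\limsup_{\xi\to x}P(v;f)(\xi)\le\beta(x)=f(x)$. This proves $\usc P(v;f)\in\mathcal{C}$, and the very same computation delivers the two displayed inequalities $P(v;f)\le v$ and $P(v;f)|_{\del D}\le f$ when $v$ and $f$ are continuous.

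It remains to prove $P(v;f)\in C^0(D)$, and this is where the hypothesis $a\ge(n-1)\pi/2$ enters. For $A\in F_a$, order the eigenvalues $\lambda_1(A)\le\cdots\le\lambda_n(A)$; then $\tan^{-1}\lambda_1(A)=\wtth(A)-\sum_{j=2}^n\tan^{-1}\lambda_j(A)\ge a-\sum_{j=2}^n\tan^{-1}\lambda_j(A)>0$, because $a\ge(n-1)\pi/2>\sum_{j=2}^n\tan^{-1}\lambda_j(A)$ (each of the $n-1$ summands being $<\pi/2$). Thus $A$ is positive definite, so $F_a\subset\mathcal{P}$, and by monotonicity of the subsolution classes $F_a(D)\subset\mathcal{P}(D)$. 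Since the functions of type $\mathcal{P}$ on an open set are precisely the convex functions, and a convex function on an open set that is not identically $-\infty$ is locally Lipschitz, the facts that $P(v;f)=\usc P(v;f)\in F_a(D)$ and $P(v;f)\not\equiv-\infty$ yield $P(v;f)\in C^0(D)$.

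The only genuinely non-formal ingredient is the boundary estimate $\limsup_{\xi\to x}P(v;f)(\xi)\le f(x)$ in the second paragraph --- the construction of the upper barrier $\beta$ and the use of the comparison principle --- which is exactly where the geometry of $\del D$ and the regularity of $f$ are needed (and which can otherwise be quoted from \cite{HL}). The remaining assertions, and in particular the $C^0$ regularity, are soft consequences of the elementary inclusion $F_a\subset\mathcal{P}$, valid precisely when $a\ge(n-1)\pi/2$.
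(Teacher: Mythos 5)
Your proof is correct, but it reorganizes the argument in a way that differs from the paper's at two points. For the identity $P(v;f)=\usc P(v;f)$ the paper argues directly: since $F_a\subset\mathcal P$ (quoting \cite[Lemma 10.4]{RS} rather than your eigenvalue computation, which is a fine direct substitute), $P(v;f)$ is a supremum of convex functions, hence convex, hence continuous once it is locally bounded above (by $v$) and below (convex functions being lsc); this yields $P=\usc P$ and the $C^0$ statement in one stroke, without any regularity hypothesis on $v$. Your route instead shows $\usc P(v;f)$ is itself a candidate, which forces you to establish the boundary inequality first and, more importantly, uses that $v$ is upper semicontinuous to get $\usc P\le v$ --- an assumption the lemma does not make in its first assertion (it is only imposed in the last one). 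For the boundary inequality itself, you build an upper barrier $\beta$ with $-\beta\in\widetilde F_a(D)$ and $\beta|_{\del D}=f$ and invoke comparison, whereas the paper simply dominates $P(v;f)$ by the Perron envelope $u(f)=\sup\{w\in F_a(D):w|_{\del D}\le f\}$ and quotes \cite[Theorem 6.2]{HL} to say $u(f)$ is the continuous solution of the $F_a$-Dirichlet problem with boundary data $f$; these are essentially the same appeal to the Harvey--Lawson Dirichlet package (and both need $\del D$ to satisfy the relevant boundary convexity, which holds since $D$ is strictly convex in the application). Your explicit verification that the candidate class is nonempty via $\lambda|x|^2+b$ is a point the paper glosses over, and is a welcome addition, though it quietly assumes $v$ bounded below --- true where the lemma is used. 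In short: correct, but the paper's convexity shortcut is more economical and covers the first assertion under weaker hypotheses on $v$.
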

\begin{proof} By \cite[(6), p. 410]{HL} it follows that $\usc P(v;f)\in F_a(D)$. The fact that 
\begin{equation}
\begin{aligned}
\label{uscPvfEq}
P(v,f)=\usc P(v;f)
\end{aligned}
\end{equation}
 can be proved as follows.
First, $F_a \subset \mathcal P$ (and hence $F_a(D) \subset \mathcal P(D)$ 
\cite[(4.2), p. 409]{HL})
\cite[Lemma 10.4]{RS}, hence $P(v,f)$ is a supremum of convex functions, hence convex
\cite[Theorem 5.5]{Rockafellar}. Thus it is continuous if it is locally bounded. 
It is certainly bounded from above in terms of $v$ and 
$f$. As convex functions are automatically lsc, it is also bounded from below.
Thus, $P(v,f)=\usc P(v;f)$ and so in particular also \eqref{uscPvfEq} holds.

Now we focus on the last statement of the Lemma. Clearly, $P(v,f) \leq v$, by continuity of $v$. For the inequality at the boundary, notice that 
$$
P(v,f) 
\leq 
u(f):=\sup\{w\in  F_a(D)\,:\,  w|_{\del D}\le f\}.
$$ 
According to \cite[Theorem 6.2]{HL},
$u(f) \in  F_a(D)$ is the unique continuous (up to the boundary) 
solution of the Dirichlet problem associated to the 
subequation $F_a$ on $D$ 
with boundary value $f$ 
(since $D$ is bounded and strictly convex domain it also satisfies the boundary assumptions of op. cit., see, e.g., \cite[Remark 8.2]{RS}).
In sum,  $P(v,f)|_{\del D} \leq u(f)|_{\del D}= f$, as desired.
\end{proof}

\begin{remark} 
{\rm
\label{}
In the last step of the proof we could have equally well have used the fact that
$$
P(v,f) 
\leq 
\sup\{w\in \mathcal P(D)\,:\,  w|_{\del D}\le f\},
$$ 
since as already noted $F_a(D) \subset \mathcal P(D)$.
As is well known, 
the right hand side is the unique convex continuous (up to the boundary) 
solution of the Dirichlet problem associated to the homogeneous
real Monge--Amp\`ere equation on (the bounded and strictly convex domain) $D$ 
with boundary value $f$ \cite[Theorem 2.8]{RauchTaylor}. 
This also implies that $P(v,f)|_{\del D} \leq  f$.
Of course, the theorem of Harvey--Lawson is more general.
A small advantage of the proof given above is that it carries over verbatim to domains $D$ which are merely strictly
$\vec F_a$ and $\vec {\tilde F}_a$ convex, cf. \cite{RS,HL}.
} \end{remark}

We now state the main result of this section. It shows that the solution of the Dirichlet problem for the DSL can be expressed as the partial Legendre transform of a family 
of solutions of obstacle problems for the {\it non-degenerate } special Lagrangian equation. This is inspired by and stands in clear analogy to a result on the
homogeneous real/complex Monge--Amp\`ere equation \cite[Corollary 2.2, 
Proposition 2.3]{DR}.

\begin{theorem}
\label{geodThm}
Let $u$ be given by Theorem \ref{MainExistenceThm}.
Then for any $(t,x) \in I \times D$ we have\begin{equation}\label{main_identity}
\baeq
u(t,x)
&=
\bigg(
P_{c-\pi/2}
\Big(
\min\{g|_{\{0\}\times D}, g|_{\{1\}\times D}-\tau\};
\inf_{r\in[0,1]}
(g|_{(0,1)\times\del D}(r,\,\cdot\,)-r\tau)
\Big)
\bigg)
^\star(t,x).
\eaeq
\end{equation}
\end{theorem}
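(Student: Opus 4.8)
The plan is to recognize the right-hand side of \eqref{main_identity} as a partial Legendre transform of a function of $(\tau,x)$ which, for each fixed $\tau$, solves an $F_{c-\pi/2}$-obstacle problem on $D$, and then to match it with the weak solution $u$ by uniqueness for the $\calF_c$-Dirichlet problem. Concretely, set
$$
w(\tau,x):=\Big(P_{c-\pi/2}\big(\min\{g|_{\{0\}\times D},\,g|_{\{1\}\times D}-\tau\};\ \inf_{r\in[0,1]}(g|_{(0,1)\times\del D}(r,\cdot)-r\tau)\big)\Big)(x),
$$
so that the claim is $u=w^\star$ on $I\times D$, where $w^\star(t,x)=\sup_{\tau\in\RR}[w(\tau,x)+\tau t]$. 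First I would record the basic structural facts: for each fixed $\tau$, Lemma \ref{uscLemma} (with $a=c-\pi/2\in[(n-1)\pi/2,n\pi/2)$) gives $w(\tau,\cdot)\in F_{c-\pi/2}(D)\cap C^0(D)$, that it is a candidate for itself, and that it is convex (being in $F_{c-\pi/2}(D)\subset\calP(D)$). Since the obstacle and the boundary data depend affinely and monotonically on $\tau$, one checks that $\tau\mapsto w(\tau,x)$ is concave and nonincreasing-plus-affine in a way that makes $w$ jointly a legitimate object for the Legendre transform in $\tau$; the transform $w^\star$ is then convex in $t$ and, by Lemma \ref{cvxLemma}, this is exactly the regularity one expects of a weak DSL solution in the $t$-direction.

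The heart of the argument is to show $w^\star$ is itself a weak solution of the $\calF_c$-Dirichlet problem with boundary values $g$, after which \cite[Theorem 1.2]{RS}'s uniqueness forces $w^\star=u$. For the subsolution property $w^\star\in\calF_c(I\times D)$, I would run the minimum principle of Theorem \ref{mainThm} ``in reverse'': approximate, differentiate, and use Corollary \ref{detHessCor}, whose identity $\det(I+\i\nabla_x^2 g)=\det(I_n+\i\nabla^2 f)/(\i\ddot f)$ and its corollary $\wtth_g=\wtTh_f-\pi/2$ are exactly what convert a family of $F_{c-\pi/2}$-envelopes in $x$ into an $\calF_c$-function in $(t,x)$ once one passes through the Legendre transform (the transform turning the ``inf over $t$'' of Theorem \ref{mainThm} into the ``sup over $\tau$'' here, and the strong-convexity hypotheses of Lemma \ref{MinLemma} being arranged by the usual $\frac1l f$-perturbation as in the proof of Theorem \ref{mainThm}). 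Symmetrically, to get $w^\star\in-\calF_c(I\times D)$ one uses that each envelope $w(\tau,\cdot)$ is a \emph{solution}, not merely a subsolution, of the relevant equation on the (open) noncontact region where $w(\tau,\cdot)<$ obstacle, together with the fact that on the contact set the Legendre transform is governed by the obstacle itself; this is the step where the precise form of the obstacle $\min\{g|_{\{0\}\times D},g|_{\{1\}\times D}-\tau\}$ and the boundary term $\inf_r(g(r,\cdot)-r\tau)$ must be used, since these are engineered so that $w^\star$ has the correct boundary trace.

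For the boundary values, I would compute $w^\star$ at $t\in\{0,1\}$ and on $(0,1)\times\del D$ directly: at $t=0$, $w^\star(0,x)=\sup_\tau w(\tau,x)$, and as $\tau\to-\infty$ the obstacle $\min\{g|_{\{0\}\times D},g|_{\{1\}\times D}-\tau\}$ converges to $g|_{\{0\}\times D}$ while the boundary term stabilizes, so $\sup_\tau w(\tau,x)=w(-\infty,x)=$ the $F_{c-\pi/2}$-envelope with obstacle $g|_{\{0\}\times D}$ and boundary $g|_{(0,1)\times\del D}(0,\cdot)$; invoking the hypothesis \eqref{eq:plbc} that $g|_{\{0\}\times D}\in F_{c-\pi/2}(D)$ shows this envelope equals $g|_{\{0\}\times D}$. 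The case $t=1$ is symmetric (the $-r\tau$ and $-\tau$ terms combine so the relevant limit is $\tau\to+\infty$), and the lateral boundary $(0,1)\times\del D$ is handled by the boundary term in $P$ together with the trace behavior of envelopes from Lemma \ref{uscLemma}. The step I expect to be the main obstacle is the ``$-\calF_c$'' supersolution property: proving that $w^\star$ lies in $-\calF_c(I\times D)$ requires controlling $w^\star$ across the contact set of each obstacle problem, where $w(\tau,\cdot)$ is no longer a classical solution of the special Lagrangian equation, and assembling this into a viscosity supersolution statement in the joint $(t,x)$ variables — essentially the converse direction to Theorem \ref{mainThm}, which the minimum principle alone does not give. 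I would address this by combining the obstacle-problem regularity and the ``solution on the noncontact set'' property of $u(f)$-type envelopes from \cite[Theorem 6.2]{HL} with the analogy to \cite[Proposition 2.3]{DR}, which carries out exactly this converse in the Monge--Amp\`ere setting.
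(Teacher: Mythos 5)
Your overall strategy --- show that $w^\star$ is itself a weak solution of the $\calF_c$-Dirichlet problem with boundary data $g$ and then invoke uniqueness from Theorem \ref{MainExistenceThm} --- is genuinely different from the paper's, and it founders precisely on the step you yourself flag as the main obstacle: the supersolution property $w^\star\in-\widetilde{\calF}_c(I\times D)$. This is the converse direction to the minimum principle of Theorem \ref{mainThm}, and nothing in the paper (nor in your sketch) supplies it: the envelopes $P_{c-\pi/2}(\cdot\,;\cdot)$ are only known to be continuous and convex, their joint regularity in $(\tau,x)$ is not controlled, and Corollary \ref{detHessCor} computes the Hessian of an infimum of a smooth, strongly $t$-convex space-time function --- it does not convert a $\tau$-parametrized family of merely continuous envelopes into a space-time supersolution via the sup defining $w^\star$. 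Appealing to the analogy with \cite[Proposition 2.3]{DR} names the difficulty rather than resolving it. The same objection, in milder form, applies to your subsolution step: "running Theorem \ref{mainThm} in reverse" through the Legendre transform is a separate statement requiring its own proof.

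The paper's proof circumvents all of this by never showing that the right-hand side is a solution. Since $t\mapsto u(t,x)$ is convex (Lemma \ref{cvxLemma}), $u=u^{\star\star}$, so it suffices to prove $u^\star(\tau,\cdot)=h_\tau$ for each fixed $\tau$, where $h_\tau$ is the envelope. One inequality is exactly your first structural observation pushed to completion: $u-t\tau\in\calF_c$, so Theorem \ref{mainThm} gives $u^\star(\tau,\cdot)\in F_{c-\pi/2}(D)$, and the Dirichlet data make it a candidate for $h_\tau$, whence $u^\star(\tau,\cdot)\le h_\tau$. For the reverse inequality the paper observes that the \emph{$t$-constant} extension $w(t,x):=h_\tau(x)$ lies in $\calF_c(I\times D)$ --- this is where the $+\pi/2$ in the definition \eqref{eq:whTh} of $\wtTh$ on $\calS$ is used, and it only requires the single function $h_\tau$, not the whole family --- and then compares: $u-t\tau\in-\widetilde{\calF}_c$, so $w-u+t\tau\in\SA(I\times D)$, and the boundary inequality plus the maximum principle \eqref{eq:mp} give $h_\tau\le u-t\tau$, hence $h_\tau\le u^\star(\tau,\cdot)$. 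In short, the known solution $u$ is used as the comparison object, so the hard "converse minimum principle" is never needed. I recommend restructuring your argument along these lines; your boundary-value computations as $\tau\to\pm\infty$ then become unnecessary as well.
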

\begin{proof}
By Lemma \ref{cvxLemma}, the function $t \to u(t,x)$ is convex.
Thus, $u^{\star\star}=u$, and hence it suffices to show that
\begin{equation}\label{eq: idtoprove}
u^\star(\tau,x)=
\inf_{t\in I}[u(t,x)-\tau t]
=P_{c-\pi/2}
\Big(
\min\{g|_{\{0\}\times D}, g|_{\{1\}\times D}-\tau\};
\inf_{r\in[0,1]}
(g|_{(0,1)\times\del D}-r\tau) 
\Big).
\end{equation}
Throughout the rest of the proof we fix $\tau$.
Let us denote the upper envelope on the left hand side by 
$$
h(\tau,x)=h_\tau(x).
$$ 
As $u \in \mathcal F_c([0,1] \times D)$, also 
$u(t,x)-\tau t\in \mathcal F_c([0,1] \times D)$ \cite[(2), p. 410]{HL}. Thus, by
Theorem \ref{mainThm}  
$$v_\tau(x):=u^\star(\tau,x) \in F_{c-\pi/2}(D).
$$
Hence, by the Dirichlet conditions on $u$ guaranteed by Theorem \ref{MainExistenceThm},
$$
v_\tau \leq \min\{g|_{\{0\}\times D}, 
g|_{\{1\}\times D}-\tau\} \ \ \textup{ and } \ \ v_\tau|_{\partial D} 
\leq \inf_{r\in[0,1]}
(g|_{(0,1)\times\del D}-r\tau).
$$ 
This implies that $v_\tau$ is a candidate in the definition of $h_\tau$, i.e., 
$v_\tau \leq h_\tau$.

We turn to prove the other inequality in \eqref{eq: idtoprove}. Notice that
$$
\min\{g|_{\{0\}\times D}, g|_{\{1\}\times D}-\tau\}\in C^0(D)
$$
since $g|_{\{0\}\times D},\, g|_{\{1\}\times D}-\tau\in C^0(D)$
by assumption. Also, letting $g_y(r):=g(r,y)$ for $y\in\del D$, 
$$
f_y(\tau):=\inf_{r\in[0,1]}(g|_{(0,1)\times\del D}(r,y)-r\tau)
=g_y^\star(\tau).
$$
Since $(r,y)\mapsto g_r(y)$ is continuous and $[0,1]$ is compact, it follows that also
$y\mapsto f_y(\tau)$ is $C^0$ in $y$ 
(Indeed, let $\eps>0$. 
Choose $\delta>0$ so that $|g_y(r)-g_z(r)|<\eps$ for all $z$ satisfying 
$|z-y|<\delta$. Then $g_y^\star(\tau)\le g_z^\star(\tau)+\eps$
and similarly $g_z^\star(\tau)\le g_y^\star(\tau)+\eps$.)
Combining these facts, Lemma \ref{uscLemma} implies that 
\begin{equation}
\begin{aligned}
\label{htauEq}
h_\tau(x)\in F_{c-\pi/2}. 
\end{aligned}
\end{equation}
We claim that
\begin{equation}
\begin{aligned}
\label{wtxEq}
w(t,x):=h_\tau(x)\in\mathcal F_c([0,1] \times D),
\end{aligned}
\end{equation}
i.e., $w$ is a (constant in $t$)  
 subsolution to the DSL equation \eqref{DSLMainEq}. 
This follows immediately from   \eqref{eq:whTh} if $h$ is $C^2$
since then $\wtTh_w(t,x)=\pi/2+ \wtth_{h_\tau}(x)\ge c$ by
 \eqref{htauEq}; otherwise, since 
$h_\tau\in C^0(D)$ by  Lemma \ref{uscLemma}, we can approximate $h_\tau$
  locally uniformly by smooth $F_c$-potentials \cite[Lemma 10.7] {RS}. Then we can apply \cite[(5'), p. 410]{HL} to conclude \eqref{wtxEq}.

We can conclude the proof by the standard argument that 
a subsolution lies below a solution. More precisely, by Theorem \ref{MainExistenceThm} and \cite[(2), p. 410]{HL},
$$
u-t\tau\in -\widetilde{\mathcal F}_c([0,1] \times D),
$$
so $w-u+t\tau\in\SA([0,1] \times D)$
\cite[Theorem 6.5] {HL}.
Since 
$$
w(t,x)-(u(t,x)-t\tau)\le0\q \h{ on}\q  \del([0,1] \times D),
$$
it follows that
$w(t,x) \leq u(t,x)-t\tau$ \cite[Proposition 2.3] {HL}. We have shown that
$$
h_\tau(x) 
\leq v_\tau(x)= \inf_{t \in [0,1]}[u(t,x)-\tau t],
$$ 
giving the other direction of \eqref{eq: idtoprove}.
\end{proof}

\section*{Acknowledgments}
This work was supported by BSF grant 2012236, 
NSF grants DMS-1610202,1515703, and a Sloan Research Fellowship.
Part of this work took place at MSRI (supported by NSF grant DMS-1440140)
during the Spring 2016 semester. The authors are grateful to J. Solomon and A. Yuval for many helpful discussions.

\def\listing#1#2#3{{\sc #1}:\ {\it #2}, \ #3.}

\begin{small}

\vspace{.3 cm}
\noindent

{\sc University of Maryland}

{\tt tdarvas@math.umd.edu, yanir@umd.edu}
\end{small}

\end{document}